\newcommand{\dist}{\mathop{\rm dist}\nolimits}
\def\argmin{\mathop{\rm argmin}\nolimits}
\newcommand{\amp}{\mathop{\:\:\,}\nolimits}
\newcommand{\Real}{\mathbb{R}}
\newcommand{\ba}{\boldsymbol{a}}
\newcommand{\bb}{\boldsymbol{b}}
\newcommand{\bc}{\boldsymbol{c}}
\newcommand{\bd}{\boldsymbol{d}}
\newcommand{\bq}{\boldsymbol{q}}
\newcommand{\bu}{\boldsymbol{u}}
\newcommand{\bv}{\boldsymbol{v}}
\newcommand{\bx}{\boldsymbol{x}}
\newcommand{\by}{\boldsymbol{y}}
\newcommand{\bz}{\boldsymbol{z}}
\newcommand{\bA}{\boldsymbol{A}}
\newcommand{\bB}{\boldsymbol{B}}
\newcommand{\bG}{\boldsymbol{G}}
\newcommand{\bH}{\boldsymbol{H}}
\newcommand{\bI}{\boldsymbol{I}}
\newcommand{\bM}{\boldsymbol{M}}
\newcommand{\bZero}{\boldsymbol{0}}
\newcommand{\eq}[1]{\begin{align*}#1\end{align*}}
\begin{document}

\title{An MM Algorithm for Split Feasibility Problems
}

\author{Jason Xu \and
        Eric C. Chi \and
        Meng Yang \and 
        Kenneth Lange}

\authorrunning{J. Xu et al.} 

\institute{J. Xu \at
Department of Biomathematics, University of California, Los Angeles, CA 90095 \\
\email{jqxu@ucla.edu} \\
\and
              E. Chi \at
Department of Statistics, North Carolina State University, Raleigh, NC 27695 \\
\email{eric\_chi@ncsu.edu} \\
\and
              M. Yang \at
Department of Statistics, North Carolina State University, Raleigh, NC 27695 \\
\email{myang13@ncsu.edu} \\
\and
	     K. Lange  \at
Departments of Biomathematics, Human Genetics, and Statistics, University of California,
Los Angeles, CA 90095   \\
\email{klange@ucla.edu} \\
}

\date{Received: date / Accepted: date}

\maketitle

\begin{abstract}
The classical multi-set split feasibility problem seeks a point in the intersection of finitely many closed convex domain constraints, whose image under a linear mapping also lies in the intersection of finitely many closed convex range constraints. Split feasibility generalizes important inverse problems including convex feasibility, linear complementarity, and regression with constraint sets. When a feasible point does not exist, solution methods that proceed by minimizing a proximity function can be used to obtain optimal approximate solutions to the problem. 
We present an extension of the proximity function approach that generalizes the linear split feasibility problem to allow for non-linear mappings. Our algorithm is based on the principle of majorization-minimization, is amenable to quasi-Newton acceleration, and comes complete with convergence guarantees under mild assumptions. Furthermore, we show that the Euclidean norm appearing in the proximity function of the non-linear split feasibility problem can be replaced by arbitrary Bregman divergences. We explore several examples illustrating the merits of non-linear formulations over the linear case, with a focus on optimization for intensity-modulated radiation therapy.
\subclass{65K05 \and 90C25 \and 90C30 \and 62J02}
\end{abstract}

\section{Introduction}\label{sec:introduction}
In the split feasibility problem, one is given a smooth function $h(\bx): \Real^m \mapsto \Real^p$ 
and two closed convex sets $C \subset \Real^m$ and $Q \subset \Real^p$ \cite{CenElf1994}.  
One then seeks a point $\bx  \in\Real^m$ simultaneously satisfying $\bx \in C$ 
and $h(\bx) \in Q$. Instances of the split feasibility problem abound. The classical linear version of the split feasibility problem takes 
$h(\bx) = \bA \bx$ for some $m \times n$ matrix $\bA$ \cite{CenElf1994}. Other  typical examples of the split feasibility problem include constrained problems
such as finding $\bx \in C$ subject to $h(\bx) = \bb$, $\lVert h(\bx) -\bb\rVert \le r$, 
or $\bc \le h(\bx) - \bb \le \bd$. 
The multi-set version of the problem represents $C = \cap_i C_i$ and $Q = \cap_j Q_j$ as the
intersections of closed convex sets $C_i$ and $Q_j$. A variety of algorithms have 
been proposed for solving the linear split feasibility problem notably those in image reconstruction and intensity-modulated radiation therapy (IMRT) \cite{Byr2002,CenBorMar2006,CenElfKop2005,CenMotSeg2007,Xu2006}. The split feasibility problem has been studied in more general settings, for example in infinite dimensions \cite{Dingping2013,MasRei2007,Xu2010} and within the broader context of split inverse problems \cite{CenGibReich2012}.

Here we generalize a previously discovered algorithm \cite{CenElf1994} for the case when $h$ is linear to the case when $h$ is non-linear, using the majorization-minimization (MM) principle from computational statistics \cite{Lan2010,Lan2013}. Like the popular CQ algorithm for solving the split feasibility algorithm \cite{Byr2002}, the primary computations are projections onto the sets $C_i$ and $Q_j$. Our MM approach comes with global convergence guarantees under very mild assumptions and includes a first-order approximation to the Hessian that lends the algorithm to powerful acceleration schemes, in contrast to previous attempts. Such attempts at the non-linear problem include a projected gradient trust-region method \cite{gibali2014} that converges to local optima. This approach constitutes a sequential linearization of the problem, and is equivalent to the CQ algorithm in the case that $h$ is linear. An adaptive projection-type algorithm based on modifying the linear algorithm is proposed in \cite{li2013}. The objective function in the non-linear case becomes non-convex even when all constraint sets are closed and convex, and convergence results in \cite{li2013} rely on simply assuming convexity, a strong restriction that is furthermore not straightforward to verify. 

Our point of departure is the proximity function \cite{CenElfKop2005}
\begin{eqnarray}
\label{eq:proximity_function1}
f(\bx) & = & \frac{1}{2} \sum_i v_i \dist(\bx,C_i)^2+
\frac{1}{2} \sum_j w_j \dist[h(\bx),Q_j]^2 , \label{proximity1}
\end{eqnarray}
where $\dist(\bx,\by)$ denotes the Euclidean distance between two vectors $\bx$ and $\by$. Note that $f(\bx)$ vanishes precisely when $\bx$ is split feasible. The weights $v_i$ and $w_j$ are
assumed to be positive with sum $\sum_i v_i + \sum_j w_j = 1$; nonuniform weights
allow some constraints to be stressed more than others. Alternative proximity functions have been employed in the split feasibility problem. For example, if we enforce a hard constraint that $\bx$ belongs to the intersection set $C$, we recover the proximity function behind the CQ algorithm for solving the split feasibility problem \cite{Byr2002}. In contrast to this and previous efforts toward the nonlinear problem, the proximity function defined in (\ref{eq:proximity_function1}) enables us to identify a point $\bx$ that is close to the desired constraints given by both $C$ and $Q$ even when the constraints cannot be satisfied simultaneously. Indeed, when there are no constraints on $h(\bx)$, the problem of minimizing the proximity function is closely related to the generalized Heron problem \cite{ChiLan2014,MorNam2011,MorNamSal2011,MorNamSal2012} of finding a
point that minimizes the distance to a collection of disjoint closed convex sets. 

The rest of our paper proceeds as follows. We begin by reviewing background material on the MM principle and deriving the MM algorithm. Next, we introduce generalized proximity functions based on Bregman divergences and derive an extension of the initial MM algorithm.  Convergence results for our MM algorithms are provided in the following section. Next, we illustrate how our MM algorithm can be applied to constrained regression problems for generalized linear models, with a focus on sparse regression. 
Before concluding the paper, we highlight the computational advantages of the non-linear split feasibility formulation over its linear counterpart with a case study in computing radiation treatment plans for liver and prostate cancer.


\section{The MM Principle}

We briefly review the MM algorithm and its relationship to existing optimization methods before introducing our MM algorithm for solving the split-feasibility problem.
The MM algorithm is an instance of a broad class of methods termed sequential unconstrained minimization \cite{FiaMcC1990} and is closely related to the important subclass of sequential unconstrained minimization algorithms (SUMMA). These algorithms have attractive convergence properties \cite{Byr2008a,Byr2013,Byr2014}, and their relationship to MM algorithms is detailed in \cite{ChiZhoLan2014}.

The basic idea of an MM algorithm \cite{BecYanLan1997,LanHunYan2000} is to convert a hard optimization problem (for example, non-differentiable) into a sequence of simpler ones (for example, smooth). The MM principle requires majorizing the objective function $f(\by)$ above by a surrogate function $g(\by \mid \bx)$ anchored at the current point $\bx$.  Majorization is a combination of the tangency condition $g(\bx \mid \bx) =  f(\bx)$ and the domination condition $g(\by \mid \bx)  \geq f(\by)$ for all $\by \in \Real^m$.  The iterates of the associated MM algorithm are defined by
\begin{equation*}
  \bx_{k+1} := \underset{\by}{\arg \min}\; g(\by \mid \bx_{k}).
\end{equation*}
Because 
\begin{equation*}
  f(\bx_{k+1}) \leq g(\bx_{k+1} \mid \bx_{k}) \leq g(\bx_{k} \mid \bx_{k}) = f(\bx_{k}),
\end{equation*}
the MM iterates generate a descent algorithm driving the objective function downhill. Strict inequality usually prevails
unless $\bx_{k}$ is a stationary point of $f(\bx)$. We note that the expectation-maximization (EM) algorithm for maximum likelihood estimation with missing data is a well-known example of an MM algorithm, where the expected log-likelihood serves as the surrogate function \cite{dempster1977}.

Returning our attention to the split feasibility problem, we propose majorizing the proximity function $f(\bx)$ given in (\ref{eq:proximity_function1}) by the following surrogate function
\begin{eqnarray}
\label{eq:surrogate}
g(\bx \mid \bx_{k}) & = & \frac{1}{2} \sum_i v_i \lVert \bx-\mathcal{P}_{C_i}(\bx_{k})\rVert_2^2+
\frac{1}{2} \sum_j w_j \lVert h(\bx) - \mathcal{P}_{Q_j}[h(\bx_{k})]\rVert_2^2,
\end{eqnarray}
which is constructed from the Euclidean projections $\mathcal{P}_{C_i}(\bx)$ and $\mathcal{P}_{Q_j}(\by)$. The surrogate function satisfies the tangency condition $f(\bx_{k}) = g(\bx_{k} \mid \bx_{k})$
and the domination condition $f(\bx) \le g(\bx \mid \bx_{k})$ for all $\bx$.
As a consequence, the minimizer $\bx_{k+1}$ of $g(\bx \mid \bx_{k})$ satisfies
the descent property $f(\bx_{k+1}) \le f(\bx)$. In fact, validation of the descent property
merely requires $\bx_{k+1}$ to satisfy $g(\bx_{k+1} \mid \bx_{k}) \le g(\bx_{k} \mid \bx_{k})$.
Because $g(\bx \mid \bx_{k}) - f(\bx)$ is minimized by taking
$\bx = \bx_{k+1}$, the equality $\nabla g(\bx_{k} \mid \bx_{k}) = \nabla f(\bx_{k})$ 
always holds at an interior point $\bx_{k}$.

The stationary condition $\nabla g(\bx \mid \bx_{k}) = {\bf 0}$ requires the gradient
\begin{eqnarray}
\label{eq:stationary}
\nabla g(\bx \mid \bx_{k}) & = & \sum_i v_i [\bx-\mathcal{P}_{C_i}(\bx_{k})]+\sum_j w_j \nabla h(\bx)\{h(\bx) - \mathcal{P}_{Q_j}[h(\bx_{k})]\},
\label{gradient}
\end{eqnarray}
where $\nabla h(\bx) \in \Real^{n \times p}$ denotes the transpose of the first differential $dh(\bx) \in \Real^{p \times n}$. When $h$ is linear, identifying a stationary point reduces to solving a linear system. When $h$ is non-linear, we propose inexactly minimizing the surrogate $g(\bx \mid \bx_{k})$ by taking a single quasi-Newton step. To this end, we compute the second order differential $d^2 g(\bx \mid \bx_{k})$.
\begin{equation}
\label{eq:hessian}
\begin{split}
d^2 g(\bx \mid \bx_{k}) & = \sum_i v_i \bI_{n} +\sum_j w_j \nabla h(\bx) dh(\bx) \\
& + \sum_j w_j d^2 h(\bx)\{h(\bx) - \mathcal{P}_{Q_j}[h(\bx_{k})]\}.
\end{split}
\end{equation}
The second differential $d^2h(\bx)$ is properly interpreted as a tensor. 
The third sum in the expansion (\ref{eq:hessian}) vanishes whenever 
$h(\bx)$ is linear or when $h(\bx) \in Q$. This observation motivates the update rule 
\begin{eqnarray}
\label{eq:mm_update}
\bx_{k+1} & = & \bx_{k}-\Big[v \bI_{n} + w\nabla h(\bx_{k}) dh(\bx_{k}) \Big]^{-1} \nabla g(\bx_{k} \mid \bx_{k}) \nonumber \\
& = & \bx_{k}-\Big[v \bI_{n} + w\nabla h(\bx_{k}) dh(\bx_{k}) \Big]^{-1} \nabla f(\bx_{k}),
\end{eqnarray}
where $v$ is the sum of the $v_i$ and $w$ is the sum of the $w_i$. The update (\ref{eq:mm_update}) 
reduces to a valid MM algorithm when $h$ is a linear function. Otherwise, it constitutes an approximate
MM algorithm 
that can be safeguarded by step-halving to preserve the descent property.

\begin{algorithm}[t]
  \caption{MM-Algorithm for minimizing proximity function (Euclidean)}
  \label{alg:MM-Euclidean}  
  \begin{algorithmic}[1]
  	\State Given a starting point $\bx_0, \alpha \in (0, 1),$ and $\sigma \in (0,1)$
	\State $k \leftarrow 0$
	\Repeat
	\State $\nabla f_k \gets \sum_i v_i [\bx_{k}-\mathcal{P}_{C_i}(\bx_{k})]+\sum_j w_j \nabla h(\bx_{k})\{h(\bx) - \mathcal{P}_{Q_j}[h(\bx_{k})]\}$
	\State $\bH_{k} \gets v \bI_{n} + w\nabla h(\bx_{k}) dh(\bx_{k})$
	\State $\bv \gets - \bH_{k}^{-1}\nabla f_n$
	\State $\eta \gets 1$	
	\While{$f(\bx_{k} + \eta\bv) > f(\bx_{k}) + \alpha \eta \nabla f_n^t\bx_{k}$}
		\State $\eta \gets \sigma \eta$ 
	\EndWhile
	\State $\bx_{k+1} \gets \bx_{k} + \eta\bv$
	 \State $k \leftarrow k+1$
	 \Until{convergence}
  \end{algorithmic}
\end{algorithm}

In essence, the update (\ref{eq:mm_update}) approximates one step of
Newton's method applied to the surrogate \cite{Lan1995}.  Dropping the sum involving
$d^2h(\bx_{k})$ is analogous to the transition from Newton's method to 
the Gauss-Newton algorithm in non-linear regression. This maneuver avoids 
computation of second derivatives while guaranteeing that the approximate second differential is positive definite. 
As the algorithm approaches split feasibility, 
the residual terms $h(\bx) - \mathcal{P}_{Q_j}[h(\bx_{k})]$ nearly vanish, further
improving the approximation and accelerating convergence. As the overall rate of convergence of any MM algorithm is linear, one step of a quasi-Newton method suffices; additional computations of inner iterations required to find the minimum of
$g(\bx \mid \bx_{k})$ are effectively wasted effort. Algorithm~\ref{alg:MM-Euclidean} summarizes the MM algorithm for minimizing (\ref{eq:proximity_function1}) 
using step-halving based on satisfying the Armijo condition.

Note that the algorithm requires inversion of a linear system involving the $n$-by-$n$ matrix $\bH_{k}$, requiring $\mathcal{O}(n^3)$ flops. 
When $p \ll n$ in the mapping $h : \Real^n \rightarrow \Real^p$, we can invoke the Woodbury formula, which tells us that the inverse of 
\begin{eqnarray*}
\bH_{k} & = & v \bI_{n} +w \nabla h(\bx_{k}) dh(\bx_{k})
\end{eqnarray*}
can be expressed as
\begin{eqnarray*}
\bH_{k}^{-1} & = & \frac{1}{v}\bI_{n} - \frac{w}{v^{2}}\nabla h(\bx_{k}) \Big(\bI_{p} + \frac{w}{v} dh(\bx_{k})\nabla h(\bx_{k})\Big)^{-1}dh(\bx_{k}).
\end{eqnarray*}
In this case, 
each update requires solving the substantially smaller $p$-by-$p$ linear system of equations if the latter form is used. Savings can be non-trivial as the dominant computation in the latter form is the matrix-matrix product $dh(\bx_{k})\nabla h(\bx_{k})$ which requires $\mathcal{O}(np^2)$ flops.


\begin{figure}[t]
\centering
\includegraphics[scale=0.7]{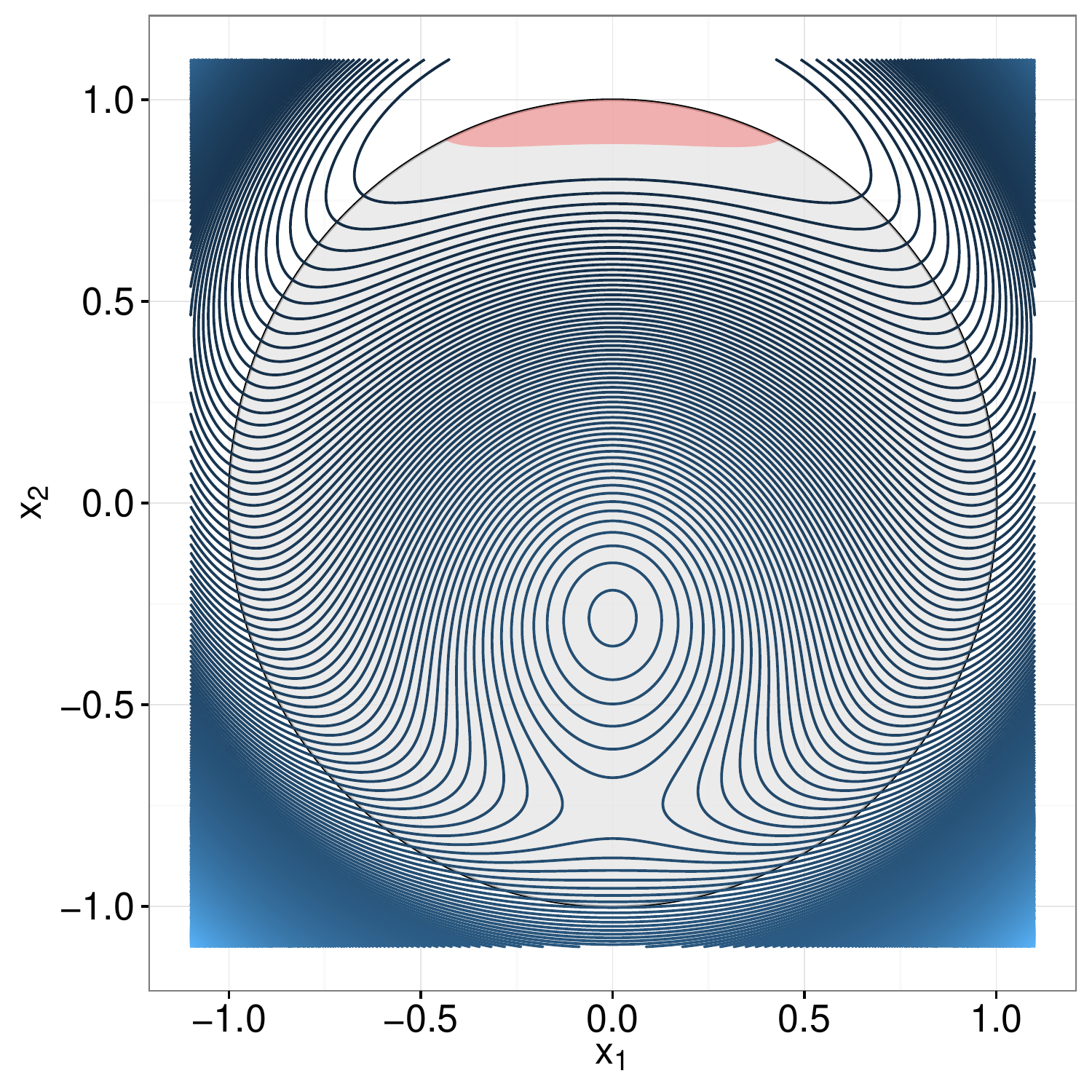}
\caption{Contour plot of the proximity function of a non-linear split feasibility problem.}
\label{fig:proximity_illustrative_ex}
\end{figure}

\paragraph{\bf Illustrative Example:} We present a small toy problem to demonstrate the MM algorithm and to motivate an important refinement.
Consider the non-linear split feasibility problem with domain set $C = \{\bx \in \Real^2 : \lVert \bx \rVert_2 \leq 1\}$, range set $Q = \{ \by \in \Real^3 : \lVert \by - \bd \rVert \leq 1\}$ where $\bd = (0,1.8,3)^t$, and mapping $h : \Real^2 \rightarrow \Real^3$. Figure~\ref{fig:proximity_illustrative_ex} shows the level sets of the proximity function
for this problem. The light grey region denotes $C$ and
the red region denotes $C \cap h^{-1}(Q)$. There are two things to observe. 
First, the proximity function is non-convex, and second, the proximity function is very flat around the split feasible region. In the non-linear split feasibility problem the proximity function in general is non-convex; we will give a useful example later in which the proximity function is convex. Nonetheless, in the most general case, we will see that only convergence to a stationary point of $f$ is guaranteed. The flatness of the proximity function near the split feasible region is important to note since it will affect the local convergence rate.

The left panel of Figure~\ref{fig:MM_sequence} shows the MM iterate sequence generated from six different starting points. All sequences converge to a split feasible point. In all cases except for the sequence starting at (0,0), the iterate sequence makes great initial progress only to slow down drastically as the proximity function becomes flatter around the split feasible region.

\begin{figure}[t]
\centering
\includegraphics[scale=0.405]{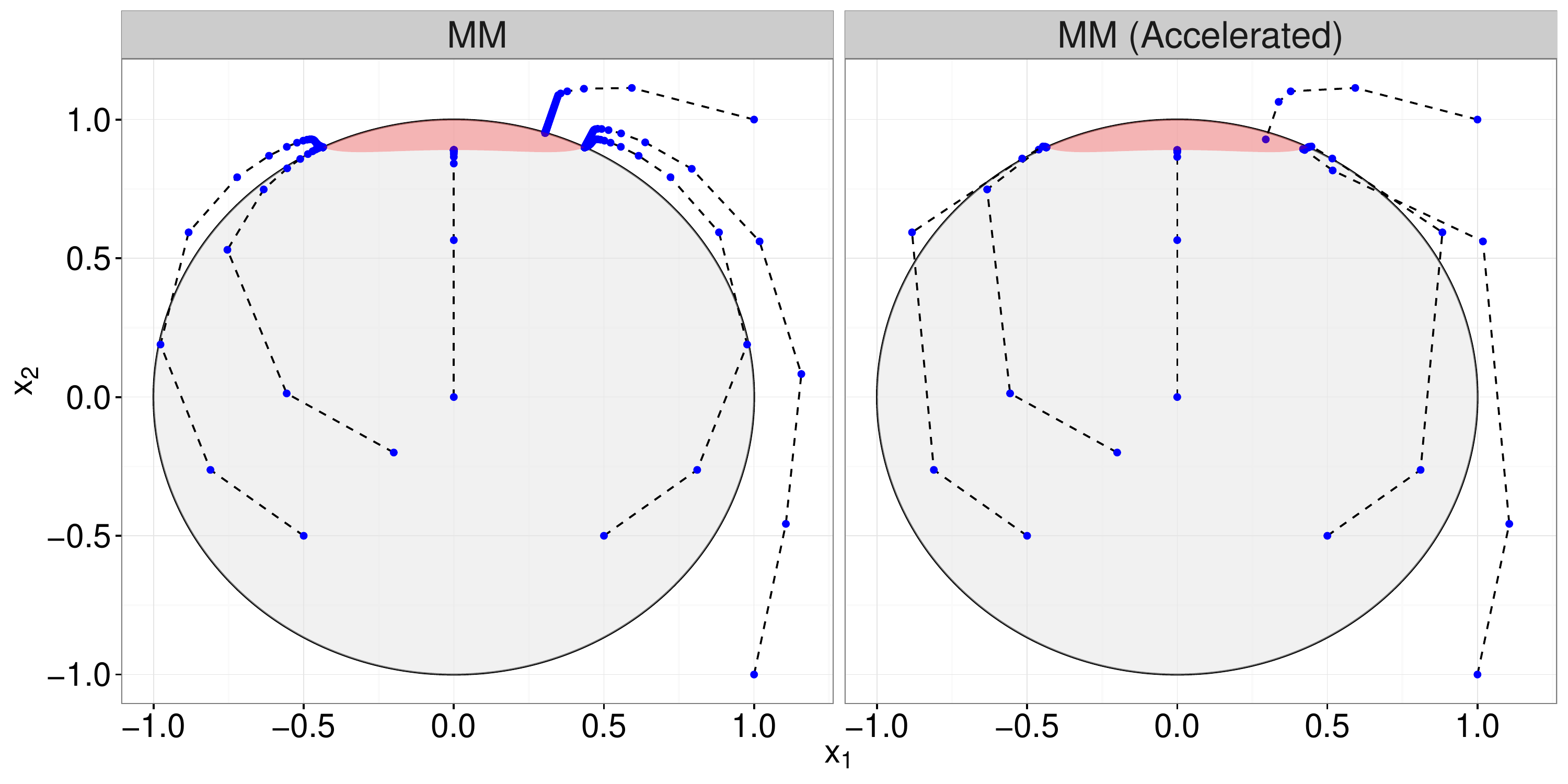}
\caption{The MM iterate sequence for a non-linear split feasibility problem initialized at six different points. On the left is the unaccelerated MM sequence. On the right is an accelerated sequence using 2 secants.}
\label{fig:MM_sequence}
\end{figure}

As this example demonstrates, MM algorithms are often plagued by a slow rate of convergence in a neighborhood of a minimum point. To remedy this situation, we employ quasi-Newton acceleration on the iteration sequence. Details on the acceleration are given in \cite{ZhoAleLan2011}.
Returning to our toy example, we see in the right panel of Figure~\ref{fig:MM_sequence} a tremendous reduction in the number of iterates until convergence using quasi-Newton acceleration with two secant approximations.

\section{Generalization to Bregman Divergences}

Note that the key property of the proximity function given in (\ref{eq:proximity_function1}) is that it attains its global minimum of zero at a point $\bx$ if and only if $\bx$ is split feasible. Thus, we may wish to consider employing alternatives to the Euclidean distance as measures of closeness in the proximity function, that maintain this key property. A natural alternative is the Bregman divergence, which we review now. Indeed, we will see in our IMRT case study that employing a Bregman divergence will result in improved computational speeds.

Let $\phi(\bu)$ be a strictly convex, twice differentiable function.  The Bregman divergence \cite{bregman1967} between $\bu$ and $\bv$ with respect to $\phi$ is defined as
\begin{eqnarray}
\label{eq:bregman}
D_\phi(\bv, \bu) & = & \phi(\bv) - \phi(\bu)- d\phi(\bu)(\bv - \bu).
\end{eqnarray}
Note that the Bregman divergence (\ref{eq:bregman}) is a convex function of its first argument $\bv$ that majorizes 0. It is natural to consider the Bregman divergence since it encompasses many useful measures of closeness. For instance, the convex
functions $\phi_1(\bv) = \lVert \bv \rVert_2^2$, $\phi_2(\bv)= -\sum_i \log v_i$, 
and $\phi_3(\bv) = \bv^t \bM \bv$ generate the Bregman divergences 
\begin{eqnarray*}
D_{\phi_1}(\bv, \bu) & = & \lVert \bv - \bu \rVert_2^2 \\
D_{\phi_2}(\bv, \bu) & = & \sum_i \Big[{v_i \over u_i}-\log \Big({v_i \over u_i}\Big) -1\Big] \\
D_{\phi_3}(\bv, \bu) & = & (\bv - \bu)^t \bM (\bv - \bu). 
\end{eqnarray*}
The matrix $\bM$ in the definition of $\phi_3(\bv)$ is assumed to be positive definite.

Note that the Bregman divergence is not a metric, as it is not symmetric in general. For instance, while $D_{\phi_1}$ and $D_{\phi_3}$ are metrics, $D_{\phi_2}$ is not a metric.
Nonetheless, it has the metric-like property that the $D_{\phi}(\bv, \bu)$ is nonnegative for all $\bu$ and $\bv$ and vanishes if and only if $\bu = \bv$ when $\phi$ is strictly convex as we have assumed. We will see shortly that this property enables us to employ Bregman divergences in the proximity function in place of the Euclidean distance \cite{censor1997parallel}. 

Before introducing the generalized proximity function, we recall that the Bregman projection of a point $\bu$ onto a nonempty closed convex set $C$ with respect to $\phi$ is the point $\mathcal{P}^\phi_C(\bu)$ defined as
\begin{eqnarray*}
\mathcal{P}^\phi_C(\bu) & = & \underset{\bv \in C}{\arg\min} \; D_\phi(\bv, \bu).
\end{eqnarray*}
Under suitable regularity conditions, the Bregman projection exists. Moreover, the Bregman projection 
is unique when it exists by strict convexity of $\phi(\bu)$. 

Let $\phi: \Real^n \rightarrow \Real$ and $\zeta: \Real^p \rightarrow \Real$ be strictly convex, twice differentiable functions.
Note that $\mathcal{P}^\phi_C(\bx) = \bx$, or equivalently $D_\phi \left (\mathcal{P}^\phi_C(\bx), \bx \right) = 0$,
exactly when $\bx \in C$. The same applies to range sets: $\mathcal{P}^\phi_Q [h(\bx)] = h(\bx)$, or equivalently $D_\zeta \left(\mathcal{P}^\zeta_Q[h(\bx)], h(\bx) \right) = 0$,
precisely when $h(\bx) \in Q$. 
Consequently, a point $\bx$ is split feasible if and only if  for all $i$ and $j$,
\begin{eqnarray*}
D_\phi \left (\mathcal{P}^\phi_{C_i}(\bx), \bx \right) = 0 \quad\quad\text{and}\quad\quad D_\zeta \left ( \mathcal{P}^{\zeta}_{Q_j}[ h(\bx)], h(\bx)\right)= 0.
\end{eqnarray*}

This observation motivates the following analogue of the
proximity function $f(\bx)$ from (\ref{eq:proximity_function1})
\begin{eqnarray}
\label{eq:proximity_function2}
\tilde{f}(\bx) & = & \sum_i v_i D_\phi \left (\mathcal{P}^\phi_{C_i}(\bx) , \bx \right) + 
\sum_j w_j D_\zeta \left ( \mathcal{P}^{\zeta}_{Q_j}[ h(\bx)] , h(\bx)\right ).
\end{eqnarray}
Note that the proximity function $\tilde{f}(\bx)$ in (\ref{eq:proximity_function2}) coincides with the original proximity function $f(\bx)$ in (\ref{eq:proximity_function1}) when $\phi$ and $\zeta$ are the squared Euclidean norms.

The proximity function $\tilde{f}(\bx)$ can be majorized in a similar fashion to our original proximity function $f(\bx)$ in (\ref{eq:proximity_function1}).
If we abbreviate $\by^i_{k} = \mathcal{P}^\phi_{C_i}(\bx_{k})$ and $\bz^j_{k} = \mathcal{P}^\zeta_{Q_j}[h(\bx_{k})]$,
then the surrogate function
\begin{eqnarray*}
\tilde{g}(\bx \mid \bx_{k}) & = & \sum_i v_i D_\phi\left(\by^i_{k} , \bx \right)+
\sum_j w_j D_\zeta\left(\bz^j_{k}, h(\bx) \right) 
\end{eqnarray*}
majorizes $\tilde{f}(\bx)$. This follows from the definition of the Bregman projection:
\begin{eqnarray*}
D_\phi \left (\mathcal{P}^\phi_{C_i}(\bx) , \bx \right ) & \leq & D_\phi \left (\mathcal{P}^\phi_{C_i}(\bx_{k}) , \bx \right ) \\
D_\zeta \left (\mathcal{P}^\zeta_{Q_j}[h(\bx)] , h(\bx) \right ) & \leq & D_\zeta \left (\mathcal{P}^\zeta_{Q_j}[h(\bx_{k})] , h(\bx) \right ).
\end{eqnarray*}
The MM principle suggests that we minimize $\tilde{g}(\bx \mid \bx_{k})$.
Since
\begin{eqnarray*}
\nabla D_\phi \left (\mathcal{P}^\phi_{C_i}(\bx_{k}) , \bx \right) & = & d^2 \phi(\bx) \left (\bx - \mathcal{P}^\phi_{C_i}(\bx_{k}) \right ) \\
\nabla D_\zeta \left (\mathcal{P}^\zeta{Q_j}[h(\bx_{k})] , h(\bx) \right) & = & d^2 \zeta[h(\bx)] \left (h(\bx) - \mathcal{P}^\phi_{Q_j}[h(\bx_{k})] \right ),
\end{eqnarray*}
a brief calculation produces the gradient of the majorization $\tilde{g}(\bx \mid \bx_{k})$ with respect to its first argument.
\begin{eqnarray*}
\nabla \tilde{g}(\bx \mid \bx_{k}) = \sum_i v_i d^2 \phi(\bx) (\bx - \by^i_{k} )
+ \sum_j w_j \nabla h(\bx) d^2 \zeta[ h(\bx) ][ h(\bx) -\bz^j_{k}].
\end{eqnarray*}
Once again, the tangency condition $\nabla \tilde{g}(\bx_{k} \mid \bx_{k}) = \nabla \tilde{f}(\bx_{k})$ holds. 
To obtain an improved point, we consider the full second differential
\begin{eqnarray*}
d^2 \tilde{g}(\bx \mid \bx_{k}) & = & \sum_i v_i \bA_i + \sum_j w_j \bB_j,
\end{eqnarray*}
where
\begin{eqnarray*}
\bA_i & = & d^3 \phi(\bx)( \bx - \by^i_{k} ) + d^2 \phi(\bx) \\
\bB_j & = & d^2 h(\bx) d^2 \zeta[ h(\bx) ] ( h(\bx) - \bz^j_{k} ) 
  + \nabla h(\bx) d^3 \zeta[ h(\bx)] d h(\bx) ( h(\bx) - \bz^j_{k} ) \\ 
 & + & \nabla h(\bx) d^2 \zeta[h(\bx)] d h(\bx).
\end{eqnarray*}
While this expression for the second differential is notably more unwieldy compared to its counterpart in the Euclidean case, the terms containing differences between $\bx$ or $h(\bx)$ and their projections---most of the terms above---vanish when $\bx \in C$ or $h(\bx) \in Q$ and become negligible near a feasible solution, greatly simplifying computations. Thus, we approximate the Hessian as follows:
\begin{eqnarray*}
d^2 \tilde{g}(\bx \mid \bx_{k}) & \approx & v \tilde{\bA} + w\tilde{\bB},
\end{eqnarray*}
where
\begin{eqnarray*}
\tilde{\bA} = d^2 \phi(\bx) \quad\quad \text{and} \quad\quad \tilde{\bB} = \nabla h(\bx) d^2 \zeta[h(\bx)] d h(\bx).
\end{eqnarray*}
This approximation motivates the update
\begin{eqnarray}
\label{eq:mm_update2}
\bx_{k+1} & = & \bx_{k}-\Big[v d^2 \phi(\bx_{k}) + w\nabla h(\bx_{k}) d^2 \zeta[h(\bx_{k})] d h(\bx_{k}) \Big]^{-1}
\nabla \tilde{f}(\bx_{k}).
\end{eqnarray}
Because the matrices $d^2\phi(\bx_{k})$ and $d^2 \phi[h(\bx_{k})]$ are positive
definite, the new algorithm enjoys the descent property if some form of 
step-halving is instituted.  Algorithm~\ref{alg:MM-Bregman} summarizes the MM algorithm for minimizing (\ref{eq:proximity_function2}) using step-halving based on satisfying the Armijo condition. Note that Algorithm~\ref{alg:MM-Bregman} reduces to Algorithm~\ref{alg:MM-Euclidean} when we take $\phi$ and $\zeta$ to be the squared Euclidean norms.

\begin{algorithm}[t]
  \caption{MM-Algorithm for minimizing proximity function (Bregman)}
  \label{alg:MM-Bregman}  
  \begin{algorithmic}[1]
  	\State Given a starting point $\bx_0, \alpha \in (0, 1),$ and $\sigma \in (0,1)$
	\State $k \leftarrow 0$
	\Repeat
	\State $\nabla \tilde{f}_{k} \gets \sum_i v_i d^2 \phi(\bx_{k}) (\bx_{k} - \mathcal{P}^\phi_{C_i}(\bx_{k}) )
+ \sum_j w_j \nabla h(\bx_{k}) d^2 \zeta[ h(\bx_{k}) ][ h(\bx_{k}) - \mathcal{P}^\zeta_{Q_j}[h(\bx_{k})]] $
	\State $\bH_{k} \gets v d^2 \phi(\bx_{k}) + w\nabla h(\bx_{k}) d^2 \zeta[h(\bx_{k})] d h(\bx_{k})$
	\State $\bv \gets - \bH_{k}^{-1}\nabla \tilde{f}_k$
	\State $\eta \gets 1$
	\While{$\tilde{f}(\bx_{k} + \eta\bv) > \tilde{f}(\bx_{k}) + \alpha \eta \nabla \tilde{f}_n^t\bx_{k}$}
		\State $\eta \gets \sigma \eta$ 
	\EndWhile
	\State $\bx_{k+1} \gets \bx_{k} + \eta\bv$
	 \State $k \leftarrow k+1$
	 \Until{convergence}
  \end{algorithmic}
\end{algorithm}

We remark that one can take Bregman projections with respect to divergences generated by set-specific functions. For example, we could use the Bregman divergence with respect to a function $\phi_i(\bx)$ for computing the proximity of $\bx$ to the set $C_i$. Similarly, we could use the Bregman divergence with respect to a function $\zeta_j$ for computing the proximity of $h(\bx)$ to the set $Q_j$. Whenever the functions $\phi_i(\bx)$ and $\zeta_j(h(\bx))$ are parameter separated, the second differentials $d^2\phi_i(\bx_{k})$ $d^2\zeta_j(h(\bx_{k}))$ are diagonal, and application of Woodbury's formula is straightforward.

\paragraph{\bf Computing the Bregman Projections:} Existence of the Bregman projection does not imply that it is always practical to compute. Here we describe a general way to obtain the Bregman projection of a point onto a hyperplane or half-space. We focus on this case for two reasons. First, computation in these two cases reduce to a one-dimensional optimization problem at worst, and admits closed form solutions for some choices of $\phi$. Second, closed convex sets can be can be approximated with a tunable degree of accuracy with a well chosen intersection of half-spaces, or polytope. Other simple convex sets, such as affine subspaces and cones can be computed using similar algorithmic primitives \cite{lorenz2014}. 

The Bregman projections will require computations involving Fenchel conjugates;
recall that the Fenchel conjugate of a function $\phi$ is
\begin{eqnarray*}
\phi^\star(\by) & = & \underset{\bx}{\sup}\; \by^t \bx - \phi(\bx).
\end{eqnarray*}
%
The Bregman projection of a point $\bx$ onto the hyperplane $HP(\ba, c) = \left\{ \bz : \ba^t\bz = c \right\}$ with nonzero $\ba \in \Real^p$ is given by
\begin{equation}\label{eq:bregproj}
 \mathcal{P}^\phi_{HP(\ba, c)}(\bx) = \nabla \phi^\star ( \nabla \phi(\bx) - \gamma \ba ) ,
\end{equation}
where $\gamma$ is a scalar satisfying
\begin{eqnarray*}
\gamma \amp := \amp \underset{\widetilde{\gamma} \in \Real}{\arg\min}\; \phi^\star ( \nabla \phi(\bx) - \widetilde\gamma \ba ) + \widetilde\gamma c.
\end{eqnarray*}

Projecting onto a half-space follows similarly from this relation. Denote the half-space $HS(\ba, c) := \left\{ \bz : \ba^t \bz \leq c \right\}$. Then $\gamma=0$ when $\bx \in HS(\ba,c)$, but otherwise $\gamma>0$, and $\mathcal{P}^\phi_{HP(\ba, c)}(\bx) = \mathcal{P}^\phi_{HS(\ba, c)}(\bx)$.
It is straightforward to verify, for instance, that this coincides with the Euclidean projection when $\phi(\bx) = \phi^\star(\bx) = \frac{1}{2} \lVert \bx \rVert_2^2$.

To illustrate this calculation consider the Kullback-Leibler (KL) divergence or relative entropy 
\begin{eqnarray*}
KL(\by, \bx) = D_{\phi}(\by, \bx)  =  \sum_j y_j \log \Big({y_j \over x_j}\Big) + x_j - y_j,
\end{eqnarray*}
generated by the negative entropy function 
$$\phi(\bx) = \sum_j x_j \log x_j, \qquad \bx \in \Real_+.$$  
In this case we have $\nabla \phi(\bx) = \log \bx + 1$, $\phi^\star(\bz) = \sum_j e^{z_j - 1}$, and $\nabla \phi^\star(\bz) = e^{\bz - 1}$. Using (\ref{eq:bregproj}), one can find that the Bregman projection is given by the vector $\mathcal{P}^\phi_{HP(\ba, c)}(\bx) = \bx e^{-\gamma \ba}$, where $\gamma$ minimizes the quantity $\sum_j x_j e^{-\widetilde\gamma a_j} + \widetilde\gamma c$. While there is no closed form for $\gamma$ in this example, the projection can be computed numerically by solving a simple one-dimensional optimization problem, with second order information readily available for efficient implementations of Newton's method. 

The KL divergence arises in a wide variety of applications, notably in image denoising and tomography, enabling entropy-based reconstruction from non-negative image data \cite{byrne1993}.
Notice that $\mathcal{P}^\phi_{Q(\ba, c)}(\bx)$ results in a multiplicative update of $\bx$ and therefore preserves positivity, a desirable feature in these settings. Indeed, the multiplicative algebraic reconstruction technique, a classical method in positron emission tomography (PET) \cite{gordon1970}, was later analyzed as a sequential Bregman projection algorithm \cite{byrne1999}. 
We note that the KL divergence is a special case of the important class of $\beta$-divergences with parameter $\beta=1$, which we will discuss in our IMRT case study in Section~\ref{sec:imrt}. When intermediate values within iterative procedures may take non-positive values, as is the case in our MM algorithm, $\beta$-divergences with $\beta > 1$ can be considered and are well-defined when $\bx$ assumes negative entries. This class of divergences has found success in applications such as non-negative matrix factorization \cite{fevotte2011}.

\section{Convergence Analysis}\label{sec:converge}

We now show that the set of limit points of the MM algorithm belong to the set of stationary points of the proximity function. Moreover, the set of limit points is compact and connected. We prove convergence under both the Euclidean based proximity function in (\ref{eq:proximity_function1}) and its Bregman generalization in (\ref{eq:proximity_function2}). We make the following assumptions:
\begin{itemize}
\item[A1.] The proximity function is coercive, namely its sublevel sets are compact.
\item[A2.] The gradient of the proximity function is $L$-Lipschitz continuous, or in other words is Lipschitz differentiable.
\item[A3.] The approximate Hessian of the proximity function $\bH(\bx)$ maps $\Real^n$ continuously into the space of positive definite $n \times n$ matrices, namely the matrix $\bH(\bx) =
 v \bI_{n} + w\nabla h(\bx) dh(\bx)$ is continuous in the case of the Euclidean based proximity function, or $\bH(\bx) = v d^2 \phi(\bx) + w\nabla h(\bx) d^2 \zeta[h(\bx)] d h(\bx)$ is continuous in the case of the Bregman generalization.
\end{itemize}


We begin by showing that step-halving under the Armijo condition is guaranteed to terminate under finitely many backtracking steps. We first review the Armijo condition: suppose $\bv$ is a descent direction at $\bx$ in the sense that $df(\bx)\bv < 0$. The Armijo condition chooses a step size $\eta$ such that
\begin{eqnarray*}
f(\bx + t\bv) \leq f(\bx) + \alpha \eta df(\bx) \bv,
\end{eqnarray*}
for fixed $\alpha \in (0,1)$. We give a modest generalization of the proof given in Chapter 12 of \cite{Lan2013}, where we substitute Lipschitz differentiability for twice differentiability of $f(\bx)$.
\begin{proposition}\label{prop:backtrack}
Given $\alpha \in (0,1)$ and $\sigma \in (0,1)$, there exists an integer $s \geq 0$ such that
\begin{eqnarray*}
f(\bx + \sigma^s\bv) \leq f(\bx) + \alpha \sigma^s df(\bx) \bv,
\end{eqnarray*}
where $\bv = -\bH(\bx)^{-1} \nabla f(\bx)$ as in the update (\ref{eq:mm_update}) or update (\ref{eq:mm_update2}).
\end{proposition}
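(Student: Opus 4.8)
The plan is to reduce the claim to the standard descent lemma for functions with $L$-Lipschitz gradient, following the template of Chapter 12 of \cite{Lan2013} but substituting Assumption A2 for twice differentiability. First I would dispose of the degenerate case: if $\nabla f(\bx) = \bZero$ then $\bv = \bZero$, and the asserted inequality holds with $s = 0$ since both sides equal $f(\bx)$. So assume $\nabla f(\bx) \neq \bZero$. By Assumption A3 the matrix $\bH(\bx)$ is positive definite, hence so is $\bH(\bx)^{-1}$, and therefore
\[
df(\bx)\bv \;=\; \nabla f(\bx)^t \bv \;=\; -\,\nabla f(\bx)^t \bH(\bx)^{-1} \nabla f(\bx) \;<\; 0 ,
\]
so $\bv$ is a genuine descent direction and in particular $\bv \neq \bZero$.

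Next I would invoke Assumption A2. Applying the fundamental theorem of calculus to $t \mapsto f(\bx + t\bv)$ on $[0,\eta]$ and using the bound $\lvert [df(\bx + t\bv) - df(\bx)]\bv\rvert \le L t \lVert\bv\rVert_2^2$ gives the quadratic majorization
\[
f(\bx + \eta\bv) \;\le\; f(\bx) + \eta\, df(\bx)\bv + \tfrac{L}{2}\,\eta^2 \lVert\bv\rVert_2^2
\]
for every $\eta \ge 0$. This is the only place the Lipschitz hypothesis enters, and it is what replaces the second-order Taylor expansion used in the twice-differentiable case.

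Then I would compare this majorization with the Armijo target. Subtracting $f(\bx) + \alpha\eta\, df(\bx)\bv$ from both sides, it suffices to find $\eta > 0$ with
\[
(1-\alpha)\,\eta\, df(\bx)\bv + \tfrac{L}{2}\,\eta^2 \lVert\bv\rVert_2^2 \;\le\; 0 ,
\]
and since $df(\bx)\bv < 0$ this holds precisely when
\[
0 \;<\; \eta \;\le\; \frac{2(1-\alpha)\,\lvert df(\bx)\bv\rvert}{L\,\lVert\bv\rVert_2^2} \;=:\; \bar\eta .
\]
Because $\sigma \in (0,1)$, we have $\sigma^s \downarrow 0$, so there is an integer $s \ge 0$ with $\sigma^s \le \bar\eta$; for that $s$ the stated inequality holds. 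This shows that the step-halving loops in Algorithms~\ref{alg:MM-Euclidean} and~\ref{alg:MM-Bregman} terminate after finitely many backtracking steps.

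The argument is essentially routine. The only point requiring care is deriving the quadratic upper bound from $L$-Lipschitz continuity of the gradient alone, rather than from a second-order expansion, together with the bookkeeping of signs that uses $df(\bx)\bv < 0$; the positive definiteness supplied by Assumption A3 is exactly what guarantees that sign. I do not anticipate a genuine obstacle.
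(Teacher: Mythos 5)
Your proof is correct and establishes exactly the stated proposition, but it takes a more elementary, pointwise route than the paper. You fix $\bx$, derive the descent lemma from A2 alone, and observe that the Armijo inequality holds once $\sigma^s$ falls below the point-dependent threshold $\bar\eta = 2(1-\alpha)\lvert df(\bx)\bv\rvert/(L\lVert\bv\rVert_2^2)$; for this you need only A2 and the positive definiteness in A3, and you also handle the degenerate case $\nabla f(\bx)=\bZero$, which the paper glosses over. The paper instead works on the compact sublevel set $\mathcal{L}_f(\bx_0)$ supplied by A1, extracts uniform bounds $a \le \lVert \bH(\bx)\rVert \le b$ on the approximate Hessian there, bounds $\lVert\bv\rVert^2 \le a^{-2}\lVert\nabla f(\bx)\rVert^2$ and $\lVert\nabla f(\bx)\rVert^2 \le -b\, df(\bx)\bv$, and thereby produces an explicit threshold $s^\star = \log\bigl(2(1-\alpha)a^2/(bL)\bigr)/\log\sigma$ that is \emph{independent of the iterate}. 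That uniformity is not cosmetic: the proof of Proposition~\ref{prop:convergence} uses the resulting lower bound $\sigma^{s_k} \ge \sigma^{s^\star+1}$ on the accepted step sizes to conclude that $\lVert\nabla f(\bx_{k})\rVert \to 0$. Your threshold $\bar\eta$ varies with $\bx$, so if your argument were adopted verbatim one would still need to supply a uniform lower bound on the step sizes before the convergence proof goes through; this is easily done within your framework, since on $\mathcal{L}_f(\bx_0)$ one has $\lvert df(\bx)\bv\rvert/\lVert\bv\rVert_2^2 = \bu^t\bH(\bx)\bu/\lVert\bu\rVert_2^2 \ge a$ with $\bu = \bH(\bx)^{-1}\nabla f(\bx)$, giving $\bar\eta \ge 2(1-\alpha)a/L$ uniformly (in fact a slightly better constant than the paper's $2(1-\alpha)a^2/(bL)$). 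In short, your approach buys economy of hypotheses for the proposition as stated, while the paper's approach buys the uniform backtracking bound that its downstream convergence analysis actually consumes.
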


\begin{proof}
Let $\mathcal{L}_f(\bx_0) \equiv \{ \bx : f(\bx) \leq f(\bx_0)\}$. Since $f(\bx)$ is coercive, $\mathcal{L}_f(\bx_0)$ is compact. Because $\nabla f(\bx)$ and $\bH(\bx)$ are continuous, 
there are positive constants $a$ and $b,$ such that
\begin{eqnarray*}
a \leq \lVert \bH(\bx) \rVert \leq b,
\end{eqnarray*}
for all $\bx \in \mathcal{L}_f(\bx_0)$. Since $\nabla f(\bx)$ is $L$-Lipschitz continuous, 
\begin{eqnarray}
\label{eq:A}
f(\bx + \eta\bv) \leq f(\bx) + \eta df(\bx)\bv + \frac{1}{2} \eta^2L\lVert \bv\rVert^2.
\end{eqnarray}
The last term on the right hand side of (\ref{eq:A}) can be bounded by
\begin{eqnarray*}
\lVert \bv \rVert^2 = \lVert \bH(\bx)^{-1} \nabla f(\bx) \rVert^2 \leq a^{-2} \lVert \nabla f(\bx) \rVert^2.
\end{eqnarray*}
We next identify a bound for $\lVert \nabla f(\bx) \rVert^2$.
\begin{equation}
\label{eq:B}
\begin{split}
\lVert \nabla f(\bx) \rVert^2 & \amp = \amp \lVert \bH(\bx)^{1/2}\bH(\bx)^{-1/2} \nabla f(\bx) \rVert^2 \\
& \amp \leq \amp \lVert \bH(\bx)^{1/2}\rVert^2 \lVert\bH(\bx)^{-1/2} \nabla f(\bx) \rVert^2 \\
& \amp \leq \amp b df(\bx) \bH(\bx)^{-1} \nabla f(\bx) \\
& \amp = \amp -b df(\bx) \bv.
\end{split}
\end{equation}
Combining the inequalities in (\ref{eq:A}) and (\ref{eq:B}) yields
\begin{eqnarray*}
f(\bx + \eta \bv) \leq f(\bx) + \eta \left (1 - \frac{b  L}{2\gamma^2}t \right ) df(\bx) \bv.
\end{eqnarray*}
Therefore, the Armijo condition is met for any $s \geq s^\star$, where
\begin{eqnarray*}
s^\star = \frac{1}{\log \sigma} \log \left ( \frac{2(1-\alpha)a^2}{b L} \right ).
\end{eqnarray*}

\end{proof}

We now prove the convergence of our MM algorithm safeguarded by Armijo step-halving.

\begin{proposition}
\label{prop:convergence}
Consider the algorithm map
\begin{eqnarray*}
\psi(\bx) = \bx - \eta_{\bx}\bH(\bx)^{-1} \nabla f(\bx),
\end{eqnarray*}
where $\eta_{\bx}$ has been selected by backtracking to ensure Armijo's condition.
Then the limit points of the sequence $\bx_{k+1} = \psi(\bx_{k})$ are stationary points of $f(\bx)$. Moreover, this set of limit points is compact and connected.
\end{proposition}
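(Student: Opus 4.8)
The plan is to follow the standard template for convergence of backtracking descent methods on coercive objectives, as in Chapter~12 of \cite{Lan2013}, adapted to the approximate-Hessian search direction $\bv = -\bH(\bx)^{-1}\nabla f(\bx)$. First I would record the facts that hold on the sublevel set $\mathcal{L}_f(\bx_0)$. Since $f$ is coercive by~A1 and the Armijo rule applied to a genuine descent direction forces $f(\bx_{k+1}) \le f(\bx_k)$, every iterate lies in the compact set $\mathcal{L}_f(\bx_0)$; hence the sequence is bounded, has limit points, and $f(\bx_k)$ decreases to a finite limit $f^\star$. By~A3 and continuity of the eigenvalues, there are constants $0 < a \le b < \infty$ with $a\bI \preceq \bH(\bx) \preceq b\bI$ for all $\bx \in \mathcal{L}_f(\bx_0)$, so the search direction obeys $\lVert \bv_k \rVert \le a^{-1}\lVert\nabla f(\bx_k)\rVert$ and $df(\bx_k)\bv_k = -\nabla f(\bx_k)^t\bH(\bx_k)^{-1}\nabla f(\bx_k) \le -b^{-1}\lVert\nabla f(\bx_k)\rVert^2$, which in particular is negative whenever $\bx_k$ is not already stationary.

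Second, I would make the backtracking step sizes uniformly bounded below. The argument of Proposition~\ref{prop:backtrack} uses only $L$ and the constants $a,b$ above, all uniform over $\mathcal{L}_f(\bx_0)$, so the terminating exponent $s^\star$ is the same for every $k$ and $\eta_{\bx_k} \ge \sigma^{s^\star} =: \underline{\eta} > 0$. Combining the Armijo inequality with the bound on $df(\bx_k)\bv_k$ gives a sufficient-decrease estimate
\begin{equation*}
f(\bx_k) - f(\bx_{k+1}) \;\ge\; -\alpha\,\underline{\eta}\, df(\bx_k)\bv_k \;\ge\; \frac{\alpha\,\underline{\eta}}{b}\,\lVert\nabla f(\bx_k)\rVert^2 .
\end{equation*}
Summing over $k$ and using $f(\bx_k)\downarrow f^\star$ yields $\sum_k \lVert\nabla f(\bx_k)\rVert^2 < \infty$, hence $\lVert\nabla f(\bx_k)\rVert \to 0$. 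If $\bx^\star$ is any limit point, continuity of $\nabla f$ (implied by A2) forces $\nabla f(\bx^\star) = 0$, so every limit point is a stationary point of $f(\bx)$.

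For the topological claims I would invoke the classical result (see e.g. Chapter~12 of \cite{Lan2013}): if a bounded sequence satisfies $\lVert\bx_{k+1} - \bx_k\rVert \to 0$, then its set of limit points is compact and connected. Here $\lVert\bx_{k+1}-\bx_k\rVert = \eta_{\bx_k}\lVert\bv_k\rVert \le a^{-1}\lVert\nabla f(\bx_k)\rVert \to 0$, the sequence is bounded, and the limit-point set is a closed subset of the compact set $\mathcal{L}_f(\bx_0)$, hence compact. I expect the routine part to be the sufficient-decrease bookkeeping; the two steps needing genuine care are (i) verifying that the constants entering Proposition~\ref{prop:backtrack} are uniform along the iterate sequence, so the step sizes cannot degenerate to zero, and (ii) the connectedness argument, for which one shows that were the limit set disconnected into two pieces separated by a positive distance, the vanishing increments $\lVert\bx_{k+1}-\bx_k\rVert\to 0$ together with the fact that the iterates visit neighborhoods of both pieces infinitely often would force a further limit point strictly between them, a contradiction.
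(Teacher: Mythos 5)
Your proposal is correct and follows essentially the same route as the paper's proof: uniform eigenvalue bounds on $\bH(\bx)$ over the compact sublevel set, the uniform lower bound on the Armijo step size from Proposition~\ref{prop:backtrack}, the sufficient-decrease inequality forcing $\lVert\nabla f(\bx_{k})\rVert \to 0$, and then $\lVert\bx_{k+1}-\bx_{k}\rVert \to 0$ combined with the classical result from Chapter~12 of \cite{Lan2013} for compactness and connectedness of the limit set. The only cosmetic differences are that you sum the decrease inequality instead of using $f(\bx_{k})-f(\bx_{k+1})\to 0$ directly and that you sketch the connectedness argument the paper merely cites.
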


\begin{proof}

Let $\bv = - \bH(\bx)^{-1} \nabla f(\bx)$ and $\bx_{k+1} = \bx_{k} + \sigma^{s_k} \bv_{k}$. Since $f(\bx)$ is continuous, it attains its infimum on $\mathcal{L}_f(\bx_0)$, and therefore the decreasing sequence $f(\bx_{k})$ is bounded below. Thus, $f(\bx_{k}) - f(\bx_{k+1})$ converges to 0. Let $s_k$ denote the number of backtracking steps taken at the $k$th iteration, and assume that the line search is terminated once the Armijo condition is met. As a consequence of backtracking,
\begin{eqnarray*}
f(\bx_{k}) - f(\bx_{k+1}) & \geq & - \alpha \sigma^{s_k} df(\bx_{k}) \bv_{k} \\
& = & \alpha \sigma^{s_k} df(\bx_{k}) \bH(\bx_{k})^{-1}\nabla f(\bx) \\
& \geq & \frac{\alpha\sigma^{s_k}}{\beta} \lVert \nabla f(\bx_{k}) \rVert^2 \\
& \geq & \frac{\alpha\sigma^{s^\star + 1}}{\beta} \lVert \nabla f(\bx_{k}) \rVert^2. \\
\end{eqnarray*}
The inequality above implies that $\lVert \nabla f(\bx_{k}) \rVert$ converges to 0. Therefore, all the limit points of the sequence $\bx_{k}$
are stationary points of $f(\bx)$.
Taking norms of the update produces 
\begin{eqnarray*}
\lVert \bx_{k+1} - \bx_{k} \rVert & = & \sigma^{s_k} \lVert \bH(\bx_{k})^{-1} \nabla f(\bx_{k}) \rVert \\
& \leq & \sigma^{s_k} a^{-1} \lVert \nabla f(\bx_{k}) \rVert.
\end{eqnarray*}
Thus, we have a bounded sequence $\bx_{k}$ with $\lVert \bx_{k+1} - \bx_{k} \rVert$ tending to 0. According to Propositions 12.4.2 and 12.4.3 in \cite{Lan2013}, the set of limit points is compact and connected.
\end{proof}

Note that in the linear case $h(\bx) = A \bx$, the gradient $\nabla f(\bx)$ is $L$-Lipschitz continuous with $L = \sum_i v_i + \rho(\bA^t \bA) \sum_j w_j$, where $\rho(\bA^t \bA)$ is the spectral radius of $\bA^t \bA$. Finally, note that we do not assume convexity of the proximity function. Whenever the proximity function is convex, stationary points are furthermore necessarily global minimizers, and therefore all limit points of the iterate sequence are global minimizers. 

Before leaving our discussion on convergence, we address the issue of the convergence of the accelerated version of our algorithm. In practice, we observe that the accelerated algorithm converges to valid solutions, but the fact that its iterates may temporarily increase our objective complicates convergence analysis. Nonetheless, convergence of the acceleration scheme can be guaranteed with the following slight modification. Each quasi-Newton step requires two ordinary MM iterations \cite{ZhoAleLan2011}. If the quasi-Newton update fails to reduce the objective function, then we revert to the second iterate $\psi \circ \psi(\bx_{k})$. This modified scheme comes with convergence guarantees described in the following proposition, proved in the Appendix.

\begin{proposition}
\label{prop:convergence_acc}
Let $\varphi$ denote the algorithm map corresponding to the modified quasi-Newton method. Generate the iterate sequence $\bx_{k+1} = \varphi(\bx_{k})$. 
Then the limit points of this sequence are stationary points of $f(\bx)$.
\end{proposition}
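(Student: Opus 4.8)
The plan is to reduce Proposition~\ref{prop:convergence_acc} to Proposition~\ref{prop:convergence} by exhibiting the modified map $\varphi$ as a descent algorithm whose per-step decrease is controlled by $\lVert \nabla f(\bx_k)\rVert^2$, exactly as in the unaccelerated case. The key structural observation is that the modified quasi-Newton scheme never increases $f$: by construction, each iteration either accepts the quasi-Newton step (only when it reduces $f$) or falls back to $\psi\circ\psi(\bx_k)$. Since $\psi$ itself is a descent map satisfying the Armijo bound from the proof of Proposition~\ref{prop:convergence}, we have $f(\psi\circ\psi(\bx_k)) \le f(\psi(\bx_k)) \le f(\bx_k)$ in the fallback case, and $f(\varphi(\bx_k)) \le f(\psi\circ\psi(\bx_k))$ in every case (we may always take the fallback value as an upper bound for what $\varphi$ returns). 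Hence $f(\varphi(\bx_k)) \le f(\psi\circ\psi(\bx_k))$ unconditionally, and the whole iterate sequence stays in the compact sublevel set $\mathcal{L}_f(\bx_0)$ by coercivity (A1).

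First I would record that $f(\bx_k)$ is nonincreasing and bounded below (it attains its infimum on $\mathcal{L}_f(\bx_0)$ by continuity and compactness), so $f(\bx_k) - f(\bx_{k+1}) \to 0$. Next I would chain the telescoping inequality through the two Armijo steps hidden inside $\psi\circ\psi$: writing $\bx_k' = \psi(\bx_k)$, the computation in the proof of Proposition~\ref{prop:convergence} gives
\begin{eqnarray*}
f(\bx_k) - f(\varphi(\bx_k)) \;\ge\; f(\bx_k) - f(\psi\circ\psi(\bx_k)) \;\ge\; f(\bx_k) - f(\bx_k') \;\ge\; \frac{\alpha\sigma^{s^\star+1}}{b}\,\lVert \nabla f(\bx_k)\rVert^2,
\end{eqnarray*}
using the uniform bounds $a \le \lVert \bH(\bx)\rVert \le b$ on $\mathcal{L}_f(\bx_0)$ guaranteed by A3 together with A2. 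Because the left side is summable (it telescopes against the bounded-below sequence $f(\bx_k)$), the right side forces $\lVert \nabla f(\bx_k)\rVert \to 0$, and continuity of $\nabla f$ then makes every limit point a stationary point of $f$.

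The one genuine subtlety — and the step I expect to need the most care — is that $\varphi$ is not literally the map $\psi\circ\psi$ on the accepted (quasi-Newton) iterations, so I cannot directly invoke the displacement bound $\lVert \bx_{k+1}-\bx_k\rVert \le \sigma^{s_k} a^{-1}\lVert \nabla f(\bx_k)\rVert$ that was used in Proposition~\ref{prop:convergence} to get compactness and connectedness of the limit-point set; the quasi-Newton secant step can in principle be large even when the gradient is small. Since the proposition as stated only asserts that limit points are stationary (not that the limit set is connected), I would not attempt to recover the connectedness claim; the argument above suffices for the stated conclusion. If a displacement bound were wanted, one would need an a~priori bound on the quasi-Newton secant update, which the acceleration scheme of \cite{ZhoAleLan2011} does not obviously provide in general, so I would leave that out. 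The proof is therefore essentially a bookkeeping exercise layering the fallback inequality on top of Proposition~\ref{prop:backtrack} and Proposition~\ref{prop:convergence}; no new analytic input beyond A1--A3 is required.
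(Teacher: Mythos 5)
Your argument is correct, but it takes a genuinely different route from the paper. The paper proves this proposition via a Zangwill-style closed-map argument: it first establishes (in an auxiliary proposition) that the Armijo line-search map $S$ is closed at every point $(\bx,\bv)$ with $\bv \neq \bZero$, hence the composite map $A = S \circ G$ is closed at non-stationary points, and then derives a contradiction along a convergent subsequence by combining $\by_\star \in A(\bx_\star)$ (which forces strict decrease $f(\by_\star) < f(\bx_\star)$) with the monotonicity chain $f(\bx_\star) \leq f(\varphi(\bx_{k_l})) \leq f(\psi\circ\psi(\bx_{k_l})) \leq f(\psi(\bx_{k_l}))$. You instead bypass the closedness machinery entirely and reuse the quantitative sufficient-decrease bound from the proof of Proposition~\ref{prop:convergence}, together with the uniform backtracking bound $s^\star$ of Proposition~\ref{prop:backtrack}, to get $f(\bx_k) - f(\bx_{k+1}) \geq \frac{\alpha\sigma^{s^\star+1}}{b}\lVert \nabla f(\bx_k)\rVert^2$ and then telescope; this is more elementary, and in fact slightly stronger, since it yields $\lVert\nabla f(\bx_k)\rVert \to 0$ along the entire sequence rather than only stationarity of limit points. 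Both arguments hinge on the same structural reading of the safeguard, namely $f(\varphi(\bx_k)) \leq f(\psi\circ\psi(\bx_k))$ (the accelerated point is accepted only if it does at least as well as the double MM fallback) — the paper's own inequality chain uses exactly this — so your reliance on it introduces no gap beyond what the paper assumes; and you are right that the displacement bound, and hence compactness and connectedness of the limit set, is neither claimed in this proposition nor recoverable without an a priori bound on the secant step. What the paper's approach buys is the closedness lemma for the line-search map, which is of independent interest; what yours buys is a shorter, purely quantitative proof with a whole-sequence gradient-vanishing conclusion.
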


\section{Application: Regression with Constraints}
Proximity function optimization offers a general framework encompassing statistical regression with constraints. For instance, we see that the two-set linear split feasibility problem with no domain constraint and a singleton range constraint $Q = \{ \by \}$ coincides with the Tikhonov regularization or ridge regression objective function $\lVert \bA \bx - \by \rVert_2^2 + \lambda \lVert \bx \rVert_2^2 $. This is an example of regularized linear regression, where the objective jointly minimizes the least squares objective along with a penalty function that encourages solutions to be well-behaved in some desired sense. We will see split feasibility provides a flexible approach toward this goal, and its non-linear and Bregman generalizations allow for extensions to generalized linear models and non-linear transformations of the mean relationship. For exposition, we focus on the important case of sparse regression.

In the high-dimensional setting where the dimension of covariates $n$ is larger than the number of observations $m$, it is often the case that we only expect few predictors to have nonzero impact on the response variable. The goal in sparse regression is to correctly identify the nonzero regression coefficients, 
and to estimate these coefficients accurately. 
Given an $m \times n$ matrix $\bA$ with $m \ll n$, and a vector $\by$ of (possibly noisy) observations, we seek to recover a sparse solution $\bx$ to the underdetermined system $\bA \bx = \by$. This can be formulated as solving the following problem
\begin{equation}\label{eq:cs_l0}
 \hat{\bx} = \argmin _{\bx} \lVert \bx \rVert_0 \hspace{12pt} s.t. \hspace{12pt} \lVert \bA \bx - \by \rVert_2 \leq \delta,
\end{equation}
where the parameter $\delta$ quantifies a variance level of the noise. 
We note that the same objective arises in compressed sensing, where $\bA$ represents the measurement and sensing protocol, and $\bx$ is a sparse signal or image to be recovered from undersampled measurements.

Because the non-convex $\ell_0$ norm renders this optimization problem combinatorially intractable, a convex relaxation that induces sparsity by proxy via the $\ell_1$ norm is often solved instead:
\begin{equation}\label{eq:cs_l1}
 \hat{\bx} = \argmin _{\bx} \lVert \bx \rVert_1 \hspace{12pt} s.t. \hspace{12pt} \lVert \bA \bx - \by \rVert_2 \leq \delta.
\end{equation}
The problem defined in (\ref{eq:cs_l1}), sometimes referred to as basic pursuit denoising, has an equivalent unconstrained formulation 
\begin{equation}\label{eq:cs_noisy_l1}
 \hat{\bx}  = \argmin _{\bx} \frac{1}{2}\lVert \bA \bx - \by \rVert_2^2 + \lambda \lVert\bx\rVert_1, 
 \end{equation} where $\lambda >0 $ is a tuning parameter governing the amount of regularization. Although there is no interpretable relationship between $\lambda$ and the sparsity constant $k$, a suitable value for $\lambda$ is typically found using cross-validation. When $\bx$ is $k$-sparse so that $\lVert \bx \rVert_0 \leq k$ and $\bA$ satisfies certain incoherence properties (e.g.\@ Theorem 1.4 in \cite{Davenport2012}),  $m = \mathcal{O} (k \log n)$ measurements suffices to recover $\bx$ with high probability guarantees. 

Equation (\ref{eq:cs_noisy_l1}) is also known as the least absolute shrinkage and selection operator (LASSO), the most prevalent regularization tool for variable selection in statistics \cite{tibshirani1996}. 
In this setting, $\bA$ denotes the design matrix containing covariate information, $\bx$ the vector of regression coefficients, and $\by$ the vector of observations or response variables. 
It is well known that LASSO estimation simultaneously induces both sparsity and shrinkage. When the goal is to select a small number of relevant variables, or seek a sparse solution $\bx$, shrinking estimates toward zero is undesirable. This bias can be ameliorated in some settings using alternative penalties such as minimax concave penalty (MCP) and smoothly clipped absolute deviation (SCAD) \cite{fan2010}, or corrected by re-estimating under the model containing only those selected variables when the support is successfully recovered. However, the shrinkage induced by LASSO notoriously introduces false negatives, leading to the inclusion of many spurious covariates. 

Instead, we may directly incorporate the sparsity constraint using the split feasibility framework by defining a domain constraint as the sparsity set
\[ C = \left\{ \bz \in \Real^n : \lVert \bz \rVert_0 \leq k \right\}. \] 
Instead of seeking a solution to (\ref{eq:cs_l0}) by way of solving (\ref{eq:cs_l1}), we now attempt to recover the sparse solution $\bx$ by minimizing the proximity function (\ref{eq:surrogate}) with $h(\bx) = \bA \bx$, $C$ as above, and $Q = \{ \by \}$.
Projecting a point $\bz$ onto $C$ is easily accomplished by setting all but the $k$ largest (in magnitude) entries of $\bz$ equal to $0$; this is the same projection required within iterative hard thresholding algorithms. Note that $C$ is non-convex, 
but we will see that in practice the MM algorithm nevertheless recovers good solutions using such projections. 

Unlike LASSO and related continuous penalty methods, this formulation enables us to directly leverage prior knowledge of the sparsity level $k$ and does not cause shrinkage toward the origin. When no such information is available, $k$ can be selected by analogous cross validation procedures used to tune the parameter $\lambda$. Split feasibility provides further advantages. It is straightforward to include multiple norm penalty terms, for instance akin to elastic net regularization to encourage grouped selection \cite{zou2005}, and additional generic set-based constraints. When an approximate solution suffices, relaxing the range constraint can easily be accomplished by replacing $Q = \{ \by \}$ by $Q = B_\varepsilon(\by)$ the $\varepsilon$-ball around $\by$. 
Further, because our MM algorithm is not confined to the linear case, smooth transformations $h(\bA \bx)$ of the mean relationship can immediately be considered. This is preferable when directly transforming the data is problematic: for example, when a log-linear relationship 
$ \log \by = \bA \bx $ 
is expected but values $y_i \leq 0$ are observed. 

\begin{figure}[htbp]
\centering
\includegraphics[scale=0.7]{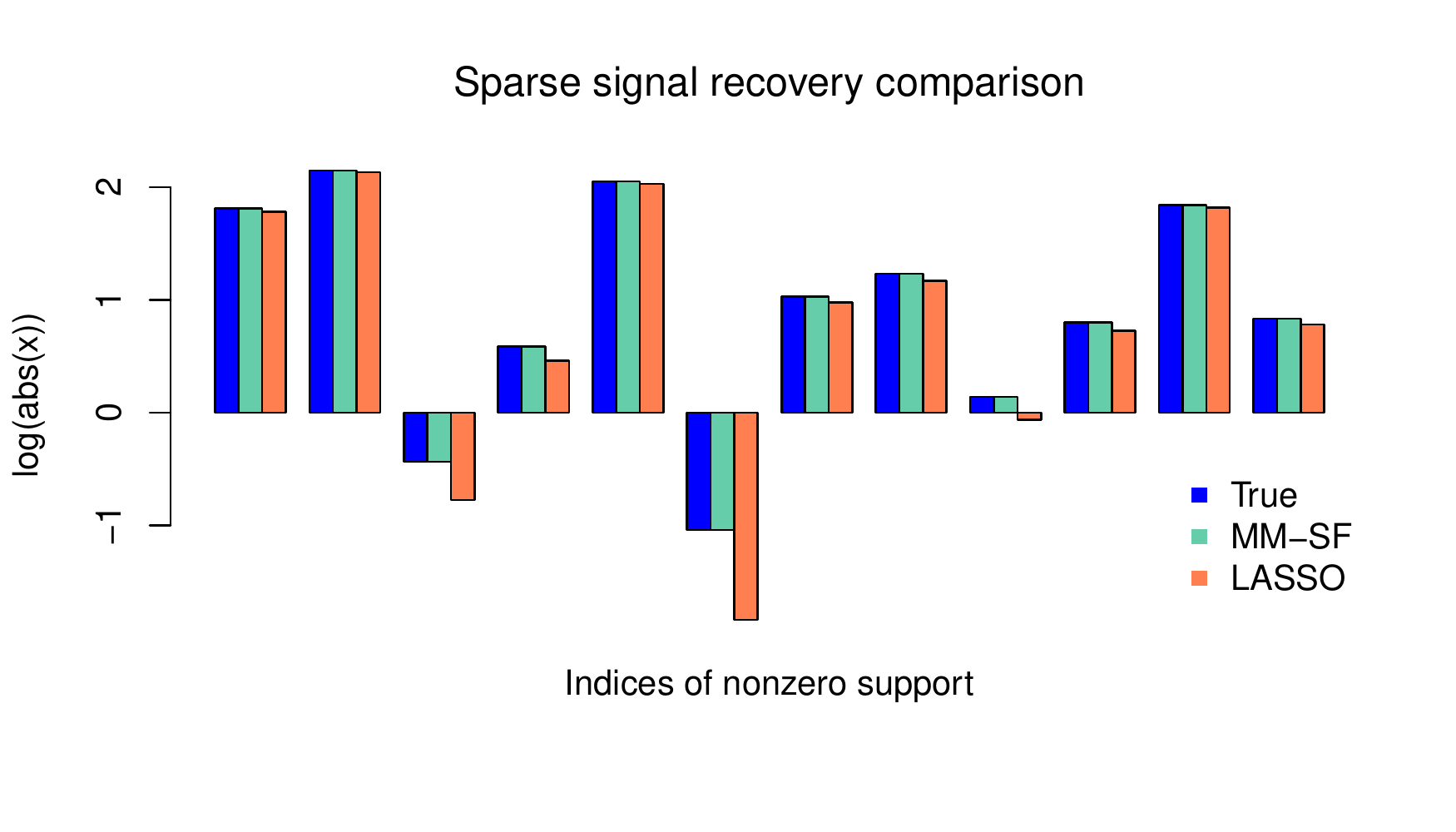} \vspace{-30pt}
\caption{Comparison of LASSO and MM approaches to noiseless recovery of a sparse signal on log scale for visual clarity. Both solutions correctly recover the nonzero support of $\bx$. The MM solution is virtually identical to the true vector, while the LASSO solution suffers visible shrinkage toward zero in each component.} 
\label{fig:l0_compare}
\end{figure}
\paragraph{\bf Performance:} To assess performance, we compare solutions to sparse regression problems using our MM algorithm and LASSO. 
To this end, we create a $300 \times 3000$ matrix $\bA$ with entries drawn from a standard normal distribution $N(0,1)$, and a true signal $\bx$ with $k=12$ nonzero entries, each drawn from $N(0,5)$ and randomly placed among the $3000$ components of $\bx$. We then generate a noiseless response vector $\by = \bA\bx$, and seek to recover $\bx$ from the observations $\by$. Figure~\ref{fig:l0_compare} displays the results of one trial of this experiment: we see that while both methods successfully recover the support of $\bx$, the MM solution under the split feasibility formulation does not suffer from the visible shrinkage that LASSO suffers. 
\begin{figure}[htbp]
\centering
\includegraphics[scale=0.7]{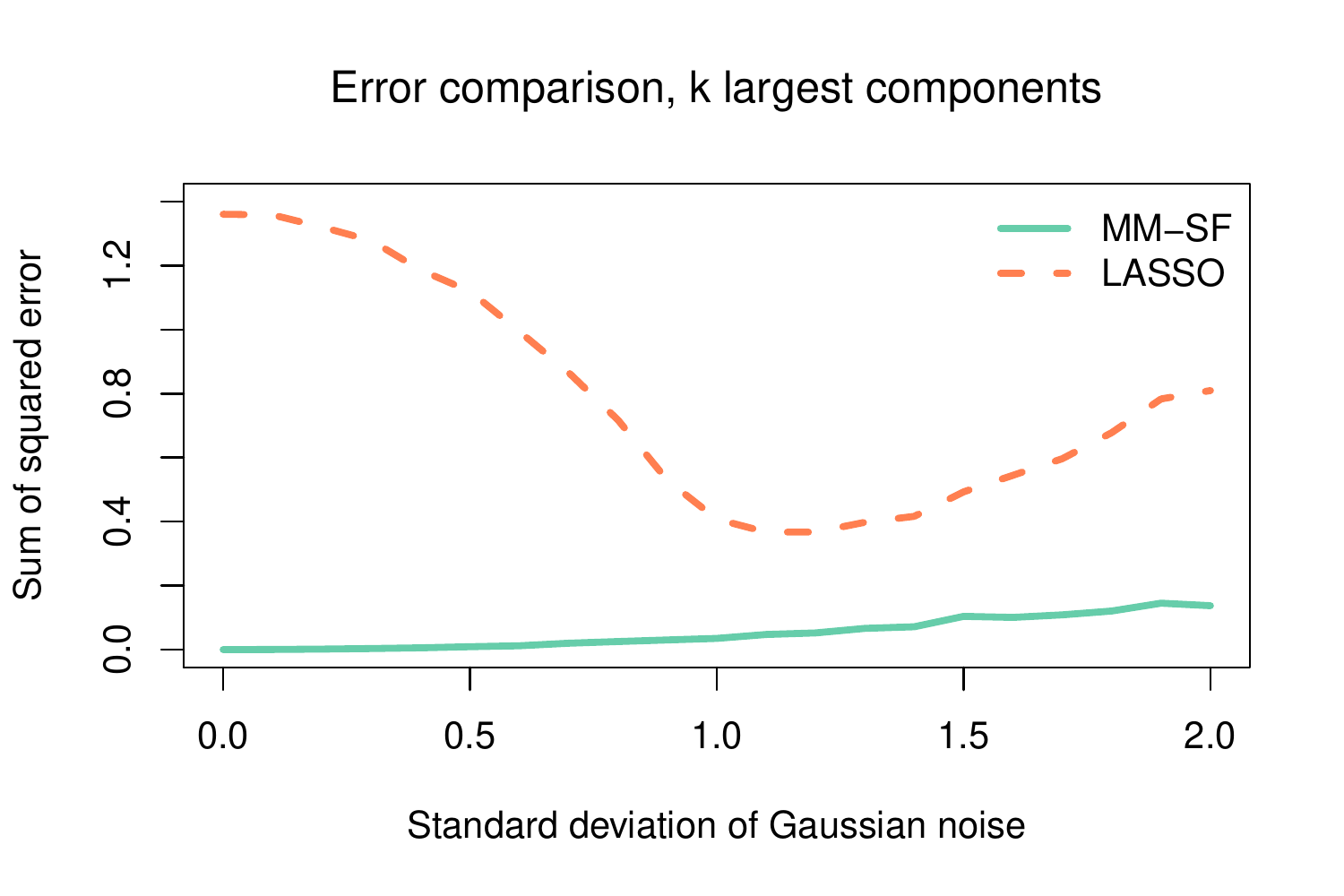} \vspace{-12pt}
\caption{Comparison of squared error among the true support of $\bx$ between solutions using LASSO and MM in the noisy case, averaged over $50$ trials. As LASSO does not benefit from prior knowledge of the sparsity level $k$, only the $k$ largest coefficients are considered; false positives are not penalized in assessing error above. The MM solution nonetheless features lower error for all noise levels $\sigma = 0, 0.1, \ldots, 2.0$. }
\label{fig:compare2}
\end{figure}

A comparison in the noisy case is presented in Figure \ref{fig:compare2}. Here $\bA$ is generated as before, and the true coefficient vector $\bx$ has $k=12$ nonzero entries each randomly assigned $5$ or $-5$ with probability $1/2$. Now, the response vector is given by $\by = \bA\bx + \varepsilon$, where $\varepsilon$ is distributed $N(0,\sigma^2)$ for a range of standard deviation values $\sigma = \{ 0, 0.1, 0.2, \ldots, 2.0 \}$. 
Results in Figure \ref{fig:compare2} report the total sum of squared error among the $k$ largest components of solution vectors obtained using LASSO and using the MM approach, averaged over $50$ trials of this experiment for each value of $\sigma$. Considering only the projection onto the $k$-sparsity set allows for a conservative comparison, as LASSO frequently includes false positives which are discarded by only assessing error on the true support. Again, the MM solution achieves lower squared error rate.

\paragraph{\bf Generalized linear models:} An extension of the above framework to generalized linear models (GLM) is made possible through Bregman divergences. The GLM is a flexible generalization of normal linear regression that allows each response $y$ to have non-Gaussian error distributions through specifying the mean relationship $ \mathbb{E}[y] = g^{-1}( \ba^T \bx )$ for a given link function $g$ \cite{mccullagh1989}. The response variable is assumed to be generated from an exponential family likelihood conditional on the covariate vector $\ba$:
\begin{equation}\label{eq:GLM}
p( y | \bx, \ba, \tau ) = C_1(y) \exp \left\{ \frac{ y \ba^T \bx - \psi( \ba^T \bx) }{C_2(\tau)} \right\}. 
\end{equation}
Here $\tau$ is a fixed scale parameter, $C_1, C_2$ are constant functions with respect to $\bx$, and $\ba$ should be thought of as a row of the design matrix $\bA$. The convex function $\psi$ is the log-partition or cumulant function. In this form, the canonical link function $g^{-1} = \psi'$.
For instance, when $\psi(x) = x^2/2$ and $C_2(\tau) = \tau^2$, (\ref{eq:GLM}) coincides with normal linear regression with identity link function. GLMs unify a range of regression settings including Poisson, logistic, gamma, and multinomial regression.
Note that a direct transform $h(\bA \bx)$ assumes a fundamentally different error structure than the GLM: as an example, a log-linear transform of the mean relationship posits that $\mathbb{E}[ \log (\by)] = \bA \bx$, while a GLM with log link function assumes $\log( \mathbb{E}[\by]) = \bA \bx$. 

We have seen that linear regression and its penalized modifications straightforwardly fall under the split feasibility framework. This is because maximizing the likelihood in the Gaussian case is equivalent to minimizing a least squares term, which appears in the proximity function (\ref{eq:proximity_function1}) as a Euclidean distance. While general exponential families do not share this feature, a close relationship to Bregman divergences allows an analogous treatment of GLMs. Specifically, the Legendre conjugate of $\psi$, which we denote $\zeta$, uniquely generates a Bregman divergence $D_\zeta$ that represents the exponential family likelihood up to proportionality \cite{polson2015}. That is, 
\[  - \log p( y | \bx, \ba, \tau) = D_\zeta \left( y , g^{-1} (\ba^T \bx ) \right) + C(y, \tau). \]
For instance, the cumulant function of the Poisson likelihood is $\psi(x) = e^x$, whose conjugate $\zeta(x)=x \log x - x$ produces the relative entropy  or Kullback-Leibler divergence $D_\zeta(x,y) = x \log (x/y) - x + y$.
We see that maximizing the likelihood in an exponential family is equivalent to minimizing a Bregman divergence, and thus generic constraints applied to the GLM such as sparsity sets can be considered as before by minimizing the proximity function 
\begin{eqnarray}
\label{eq:prox_glm}
f(\bx) & = & \sum_i v_i D_\phi \left (\mathcal{P}^\phi_{C_i}(\bx) , \bx \right) + 
\sum_{j=1}^m  D_\zeta \left ( y_j , g^{-1}( \ba_j^T\bx) \right).
\end{eqnarray} 

\subsection{Non-linear Complementarity and Quadratic Programming}
In the non-linear complementarity problem we are given a smooth function 
$u(\bx): \Real^p \mapsto \Real^p$, and seek an $\bx \ge {\bf 0}$
such that $u(\bx) \ge {\bf 0}$ and $u(\bx)^t \bx = 0$ \cite{RocWet1998}.  This becomes a subproblem within our current framework by taking
\begin{eqnarray*}
h(\bx) & = & \begin{pmatrix} \bx \\ u(\bx) \end{pmatrix}\quad \text{and}\quad 
D = \left \{\begin{pmatrix} \bx \\ \by \end{pmatrix} \in \Real^{2p}: \bx \ge {\bf 0}, \:\by \ge {\bf 0}, \: \bx^t\by = 0 \right\}.
\end{eqnarray*}
The fact that $D$ is closed but not convex does not prevent us from straightforwardly
projecting onto it. 
The set $D$ splits into the Cartesian product of the
sets $D_i  = \{(x_i,y_i): x_i \ge 0, y_i \ge 0,\, x_iy_i=0\}$. Therefore the projection
operator $\mathcal{P}_D(\bu,\bv)$ is the Cartesian product of the
projection operators $\mathcal{P}_{D_i}(u_i,v_i)$, defined by
\begin{eqnarray*}
\mathcal{P}_{D_i}(u_i,v_i) & = & \begin{cases} (u_i,0) & u_i>v_i \ge 0 \\
(0,v_i) & v_i>u_i \ge 0 \\ (u_i,0) \: \text{or} \: (0,v_i) & u_i=v_i \ge 0  \\
(\max\{u_i,0\},\max\{v_i,0\}) & \text{otherwise .} \label{lcp_project}
\end{cases}
\end{eqnarray*}
The relevant gradient is now $\nabla h(\bx) = [\bI_{n},\nabla u(\bx)]$. As before,
second derivatives are unnecessary in finding a split feasible point. 

Notice the well-studied linear complementarity problem is the special case with $u(\bx) = \bM \bx + \bq$. Namely, given a real matrix $\bM$ and vector $\bq$, we seek a point $\bx $ such that $\bx \ge {\bf 0}, \bM \bx + \bq \geq {\bf 0},$ and $\bx^t (\bM \bx + \bq ) = 0$. This can be recast as minimizing the quantity $\bx^t (\bM \bx + \bq ) $ subject to the other non-negativity constraints. The minimum is $0$ if and only if the optimal solution solves the linear complementarity problem.
As a simple concrete example, consider the convex quadratic program stated with the goal of minimizing
$\bc^t \bx + \frac{1}{2} \bx^t \bB \bx$ subject to $\bA \bx \geq \bb, \bx \geq {\bf 0}$, where $\bB$ is a symmetric matrix. This problem reduces to a complementarity problem via the Karush-Kuhn-Tucker conditions: it is straightforward to show that the equivalent linear complementarity problem is defined by
$\bq = \begin{pmatrix} \bc \\ -\bb \end{pmatrix}$, $\bM = \begin{pmatrix} \bB & -\bA^t \\ \bA & {\bf 0} \end{pmatrix}$. Thus, the split feasibility framework yields a solution technique for quadratic programming. Here $h(\bx) = \begin{pmatrix} \bx \\ \bM \bx + \bq \end{pmatrix}$, and the update (\ref{eq:mm_update}) has closed form since it involves minimizing the squared distance between $\bx_{k}$ and its projection. If we denote the projection given by (\ref{lcp_project})
$\mathcal{P}_D[ h(\bx_{k})] = \begin{pmatrix} \ba_n \\ \bb_n \end{pmatrix}$, then
$\bx_{k+1} = \bG \begin{pmatrix} \ba_n \\ \bb_n - \bq \end{pmatrix}$ where the pseudoinverse $\bG = (\bM^t \bM)^{-1} \bM^t$ can be precomputed once and stored for use at each iteration.
The resulting algorithm is elegantly transparent and very straightforward to implement: at each step, project $\bx_{k}$ onto $D$, and then update $\bx_{k+1}$ via one matrix-by-vector multiplication. A na\"{\i}ve Julia implementation of this procedure produces solutions consistent with highly optimized solvers in the commercial optimization package Gurobi. Though not as computationally efficient, our algorithm is nonetheless appealing in its simplicity, and furthermore provides a notion of best solution when faced with an infeasible quadratic program. Such approximate solutions when not all constraints can be satisfied are valuable in many settings, as we will see in the following case study.

\section{Case study: Intensity-modulated Radiation Therapy}
\label{sec:imrt}
We now turn to a second application of split feasibility toward optimization for intensity-modulated radiation therapy (IMRT) data \cite{PalMac2003}. 
In IMRT, beams of penetrating radiation are directed at a tumor from external sources. We refer the interested reader to the review papers \cite{EhrGHam2010,SheFerOli1999} for more details on this rich problem.  Treatment planning consists of three optimization problems: beam-angle selection, fluence map optimization, and leaf sequencing. The linear split feasibility framework has played a significant role toward solving the second problem, which we focus our attention on here \cite{CenBorMar2006}. 

During treatment planning, a patient's tissue is partitioned into $m$ voxels which collectively comprise $p$ regions or collections of indices $\mathcal{R}_j$, so that $\{1, \ldots, m\}$ is the disjoint union $\cup_{j=1}^p \mathcal{R}_j$. The $j$th region $\mathcal{R}_j$ is denoted as a target, critical structure, or normal tissue. 
The dose of radiation that voxel $i$ receives by a unit weight, or intensity, in beamlet $l$ is denoted by the $il$th entry in the dose matrix $\bA \in \Real^{m \times n}$. Since doses scale linearly with weight and are additive across beamlets, we can express concisely the dosage $\bd = \bA\bx$ where $\bx \in \Real^n$ is a vector of beamlet weights. 

Target voxels are those containing tumorous tissue and surrounding tissue margins. The goal is thus to identify a beam weight assignment $\bx$ such that the target voxels receive close to a specified dose of radiation, while the critical and normal regions do not receive a dose in excess of a specified tolerance. Critical structures are distinguished from normal tissue, since failing to control the radiation exposure to the former can lead to serious effects on the patient's quality of life. Damage to normal tissue, on the other hand, is undesirable but more tolerable, and exposing normal and critical tissue to some level of radiation is unavoidable. Target regions may abut critical structures, or there may simply be no path to the target that is free of non-tumorous tissue. For modeling purposes, we will lump together the critical and normal voxels, referring to them collectively as non-target regions. Our formulation will distinguish the two using the weights $w_j$.

These problems typically involve 1,000-5,000 beamlets and up to 100,000 voxels \cite{AlbMeeNus2002,HouWanChe2003,LlaDeaBor2003,ZhaLiuWan2004}. This results in a linear split feasibility problem with a large number of equations and constraints. We will see that casting the problem non-linearly can drastically reduce the size of the system of  equations to be solved at every iteration. Begin by arranging the rows of $\bA$ so that voxels corresponding to targets and non-target structures are in contiguous blocks. So, we can write $\bA = [\bA_1^t \cdots \bA_p^t]^t$.
Suppose the $j$th region $\mathcal{R}_j$ is a non-target, so that we wish to limit the dose in each voxel of $\mathcal{R}_j$ to be at some specified maximum dose $d_j$. We can write this na\"{\i}vely as
\begin{eqnarray*}
\sum_{l=1}^n a_{il} x_l \leq d_j,
\end{eqnarray*}
for all $i \in \mathcal{R}_j$, translating the fluence map optimization to a linear feasibility problem with mapping $h(\bx) = \bA\bx$ and simple voxel-by-voxel box constraints. This is straightforward but leads to a large system of equations to solve, even after employing the Woodbury formula. Alternatively, note that this constraint can be written more concisely as 
\begin{eqnarray*}
\max (\bA_j \bx) \leq d_j
\end{eqnarray*}
Since our MM algorithm requires differentiable mappings, we replace the $\max$ appearing above by the softmax mapping $\mu : \Real^n \rightarrow \Real$ given by
\begin{eqnarray*}
\mu(\bx) & = & \frac{1}{\gamma} \log \left (
\sum_{l=1}^n \exp(\gamma x_l)
\right ),
\end{eqnarray*}
with parameter $\gamma > 0$. The softmax is always greater than the $\max$, and the approximation improves as $\gamma \rightarrow \infty$. Likewise, the softmin can be calculated by $-\mu(-\bx)$ and is always less than $\min(\bx)$. Now, let $h_j(\bx) = \mu(\bA_j \bx)$ when the $j$th region is a non-target, and let $h_j(\bx) = - \mu(-\bA_j \bx)$ when the $j$th region is a target. Then 
$h : \Real^n \rightarrow \Real^p$ leads to a non-linear ``region-by-region" formulation that may significantly reduce the size of the optimization problem, as typically the number of regions $p$ will be dramatically smaller than the number of voxels $m$. If the $j$th region
is a non-target, then the range constraint on the region $\mathcal{R}_j$ is $Q_j = \{ \by \in \Real^p : y_i \leq d_j, \forall i \in \mathcal{R}_j \}$.
Alternatively, if the $j$th region
is a target, then the range constraint on the region $\mathcal{R}_j$ is $Q_j = \{ \by \in \Real^p : y_i \geq d_j, \forall i \in \mathcal{R}_j \}$.
Let $\mathcal{T} \subset \{1, \ldots, p\}$ denote the set of regions 
that are targets. Then
\begin{equation}
\label{eq:rr}
\begin{split}
f(\bx) & = \frac{1}{2} \sum_i v_i \dist(\bx,C_i)^2+
\frac{1}{2} \sum_j w_j \dist[h(\bx),Q_j]^2 , \nonumber \\ 
& = \frac{1}{2} \sum_i v_i \dist(\bx, C_i)^2 \nonumber \\ 
+ &
\frac{1}{2} \left [
\sum_{j \in \mathcal{T}^c} w_j [\mu(\bA_j \bx) - d_j]_+^2 + 
\sum_{j \in \mathcal{T}} w_j [d_j + \mu(-\bA_j \bx)]_+^2
\right ],
\end{split}
\end{equation}
where $[u]_+ = \max(u,0)$. Under the non-linear mapping $h(\bx)$ and constraint sets $Q_j$, the proximity function (\ref{eq:rr}) is convex. 
Since $\exp(\gamma x_i)$ is log-convex, $\mu(\by)$ is convex, and as the composition of a convex function with an affine mapping preserves convexity, $\mu(\bA\bx)$ is convex. Finally, since $[u]_+$ is a non-decreasing convex function, the composition $[\mu(\bA_j \bx) - d_j]_+$ is convex, and therefore its square is also convex. Thus, by Proposition~\ref{prop:convergence}, the limit points of the iterate sequence are guaranteed to be global minimizers of the proximity function.

The necessary Jacobian $\nabla h(\bx)$ appearing in the MM update is available in closed form based on the partial derivative
\begin{eqnarray*}
\frac{\partial}{\partial x_i} \mu(\bx) & = & \frac{ \exp(\gamma x_i) }{\sum_{j=1}^n \exp(\gamma x_j)}. 
\end{eqnarray*}
In vector notation, this amounts to
\begin{eqnarray*}
\nabla \mu(\bx) & = & \frac{1}{\sum_{j=1}^n \exp(\gamma x_j)} \exp(\gamma \bx),
\end{eqnarray*}
where $\exp(\gamma \bx) = \begin{bmatrix}\exp(\gamma x_1) & \cdots & \exp(\gamma x_n)\end{bmatrix}^t$. The chain rule gives us 
\begin{eqnarray*}
\nabla h(\bx) & = & \begin{bmatrix} \bA_1^t \nabla \mu(\bA_1 \bx) & \cdots & \bA_p^t \nabla \mu(\bA_r \bx) \end{bmatrix}. \\
\end{eqnarray*}

{\bf Results and Performance:} We apply our MM algorithm to seek optimal beam weight assignments in publicly available liver and prostate CORT datasets \cite{Craft2014}. The dose influence matrix features $m=47,089$ rows (voxels) in the liver dataset, corresponding to a $217 \times 217$ computerized tomography (CT) cross-section of the liver, and $n=458$ columns whose indices correspond to beamlets. Liver tissue is partitioned into two target and two non-target regions. The prostate data consists of two target regions and five non-target regions, with $m=33,856$ and $n=721$. In addition to lower and upper dosage constraints on target and non-target voxels respectively, we impose a positivity constraint on the domain of beamlet weights.

\begin{figure}
\centering
\includegraphics[scale=0.55]{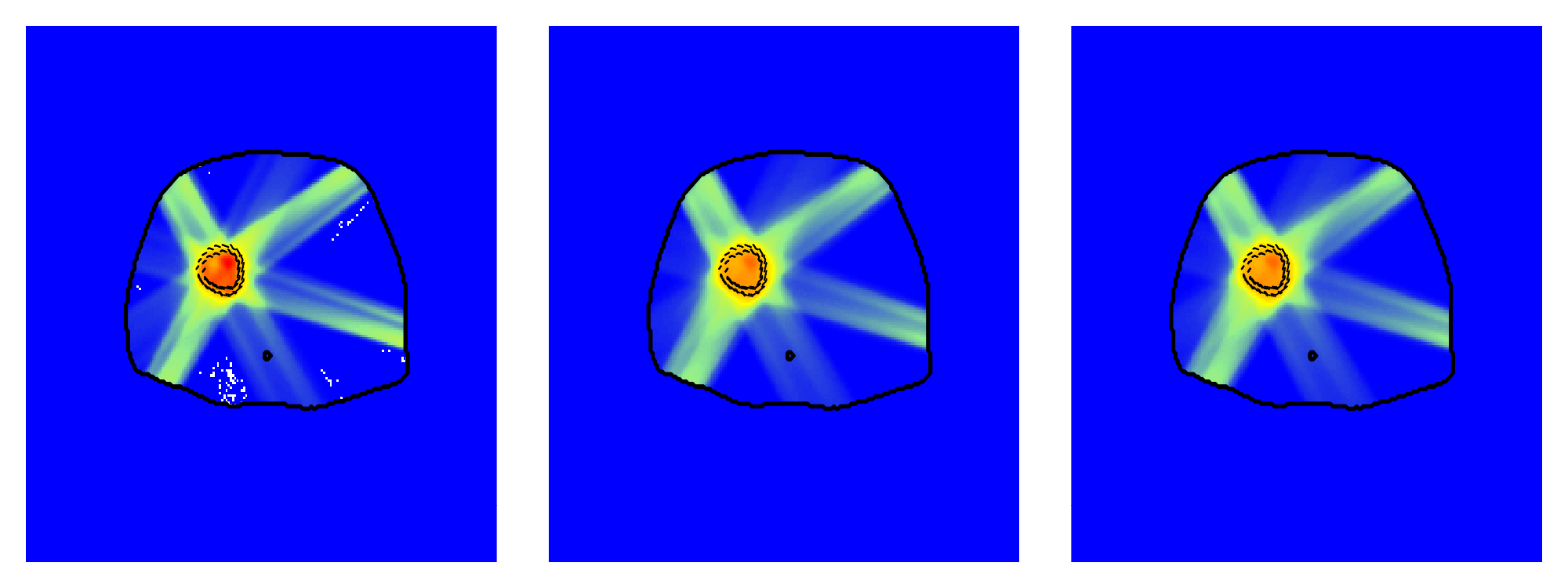} \includegraphics[scale=0.55]{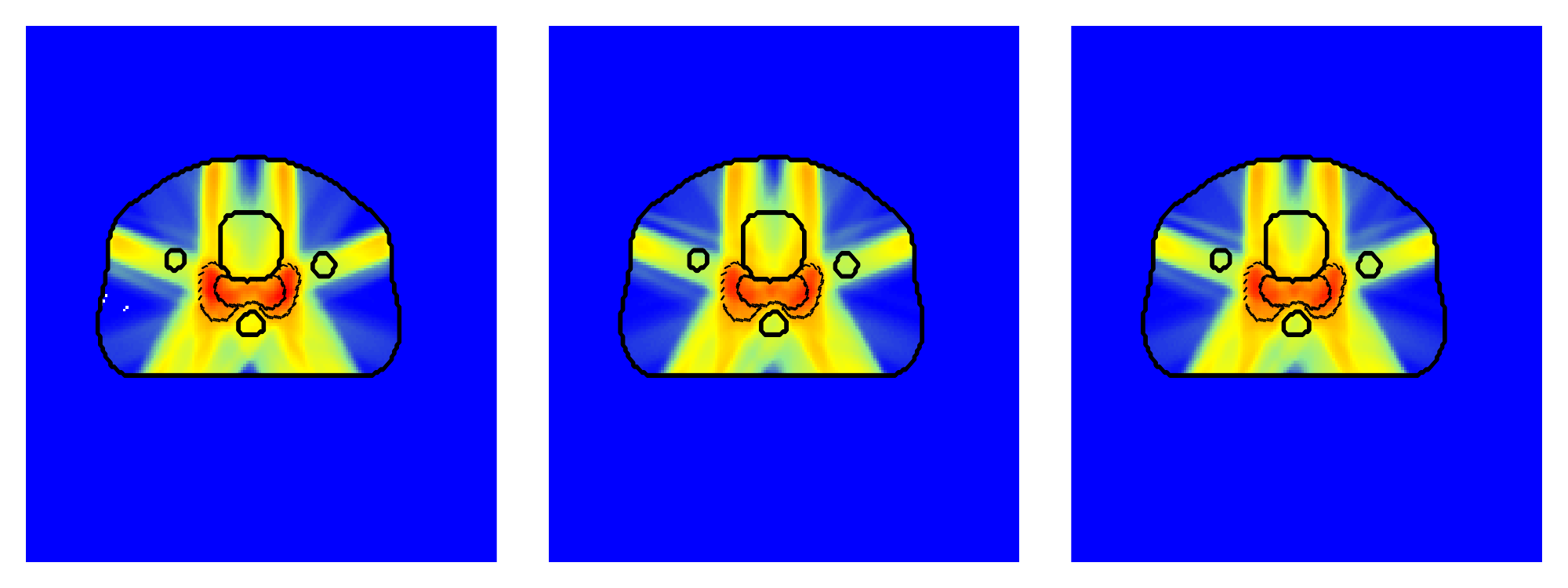} 
\caption{From left to right, solutions to the original linear (voxel-by-voxel) feasibility problem, the non-linear (region-by-region) version, and the region-by-region problem under the beta divergence with $\beta=4$ on cross-sectional CT liver data (top panel) and prostate data (bottom panel). Target regions are enclosed by dashed lines, while non-target regions are outlined with solid lines. Blue pixels receive no radiation, while warmer colors indicate higher doses. Negative entries of solutions appear as white pixels. }
\label{fig:IMRT_liver}
\end{figure}

For each dataset, we solve the linear split feasibility problem as well its non-linear region-by-region approximation described above. The linear problem is solved na\"{\i}vely via the update (\ref{eq:mm_update}), as well as using an efficient direct solve, replacing (\ref{eq:mm_update}) in the algorithm by the exact update 
\[ \bx_{k+1}  =  (v\bI + w\bA^t\bA)^{-1} \left[ \sum_i \bv_i \mathcal{P}_{C_i}(\bx_{k}) + \bA^t \sum_j w_j \bA \mathcal{P}_{Q_j}(\bA\bx_{k}) \right]. \] 
Matrix inverses are precomputed via Cholesky decomposition and cached in the linear case. Furthermore, we consider a Bregman generalization of the region-by-region formulation using the $\beta$-divergence in place of Euclidean projections. Details, derivations, and complexity analysis are provided in the Appendix. A visual comparison of solutions in each case is displayed in Figure \ref{fig:IMRT_liver}. Initializing each trial from $\bx_0 = \mathbf{0}$, all three solutions are qualitatively similar in both datasets, illustrating that the smooth relaxation provides a good approximation to the original IMRT problem. Target regions receive higher dosage as one would expect, although solutions to the non-linear problems appear to benefit from smoothing. Solutions to the linear voxel-by-voxel problem contain scattered negative entries, an undesirable effect that is more apparent in the liver dataset.

\begin{table}[htbp]
\centering
\begin{tabular}{| l |  c c c c | } 
 \multicolumn{5}{c}{Median (MAD), liver data}  \\ \hline
Model & Linear & Linear, Direct & Non-linear & Bregman  \\ \hline
Time (sec) & 988.9 (193.7) & 114.9 (20.5) &  16.8 (3.9) & 10.3 (2.9) \\
Ref. Obj. & $7.17 \mathrm{e}$-3 ($7.12 \mathrm{e}$-4) &  $7.87 \mathrm{e}$-3 ($6.54 \mathrm{e}$-4) &  $2.35\mathrm{e}$-2 ($5.01\mathrm{e}$-3) & $8.81 \mathrm{e}$-3 ($2.24 \mathrm{e}$-3) \\ \hline
\end{tabular}
\begin{tabular}{| l | c c c c | }
 \multicolumn{5}{c}{Median (MAD), prostate data}  \\ \hline
Model & Linear & Linear, Direct & Non-linear & Bregman \\ \hline
Time (sec) & 1842.1 (357.8) & 138.1 (20.6) & 22.9 (3.6) & 12.9 (1.6) \\
Ref. Obj. & $1.42\mathrm{e}$-2 ($6.54\mathrm{e}$-4) & $1.50\mathrm{e}$-2 ($4.95\mathrm{e}$-4) & $6.17\mathrm{e}$-2 ($9.31\mathrm{e}$-3) & $1.73 \mathrm{e}$-2 ($2.15\mathrm{e}$-3) \\ \hline
\end{tabular}

\caption{Comparison of runtimes and voxel-by-voxel objective values in the linear, linear with direct solve, non-linear, and Bregman formulations of the split feasibility problem applied to liver and prostate IMRT data. Medians and median absolute deviations (MAD) are reported over $25$ uniform random initializations. }
\label{tab:IMRT}
\end{table}

A more detailed performance comparison is included in Table \ref{tab:IMRT}. In each case, results are reported over $25$ random initializations, with entries of $\bx_0$ drawn uniformly between $0$ and $10$, and relative tolerance set at $1\mathrm{e}$-6 to determine convergence. The Bregman column corresponds to results employing the $\beta$-divergence with $\beta=4$ in place of the Euclidean norm. Because each formulation optimizes a distinct objective function, all solutions are inserted into the voxel-by-voxel objective function, which serves as a reference objective enabling direct comparison. 

From Table \ref{tab:IMRT}, it is evident that the linear feasibility problem requires an order of magnitude more computing time than the non-linear and Bregman versions, despite producing similar solutions. This remains true despite caching the matrix inverse for repeated use in each iteration of the linear problem, and directly solving for the exact minimum in each iteration in place of the update given by (\ref{eq:mm_update}) used in all other versions. We remark that the number of columns (beamlets) in the IMRT datasets is less than a thousand. In applications with very large matrices, caching in the linear case becomes of limited use when even one matrix inversion becomes computationally prohibitive. On the other hand, a non-linear formulation may drastically reduce the inherent size of the optimization problem, and together with the Woodbury formula, enables our algorithm to be applied in such cases. 

In terms of the reference objective, all formulations achieve a low value, but it is notable that solutions under $\beta$-divergence achieve an objective value almost as low as the linear case, without directly optimizing the voxel-by-voxel loss. In addition, Figure \ref{fig:IMRT_liver} suggests that solutions to the region-by-region problems do not violate the positivity constraint on the domain. While some degree of range constraint (dose limit) violation is typically inevitable in this application, the negative weights appearing in the solution to the linear case lack interpretability to practitioners, and are therefore more problematic.  These empirical results illustrate several clear advantages in pursuing non-linear and Bregman formulations of split feasibility problems.

\section{Discussion}

This paper extends the proximity function approach used to solve the classical linear split feasibility function. Using the versatile MM principle, we can now accommodate non-linear mappings and general Bregman divergences replacing the Euclidean distance. Our algorithm does not require second derivative information and is amenable to acceleration, for instance via quasi-Newton schemes relying on simple secant approximations. Under weak assumptions, we prove that limit points of the algorithm are necessarily stationary points of the proximity function. Thus, convergence to a global minimum is guaranteed for convex sets. A variety of examples that fall under the non-linear split feasibility framework are included, some of which involve non-convex constraint sets. Nonetheless, our algorithm is capable of finding valid solutions in such settings, demonstrating that its practical use extends beyond nicely behaved settings where iterates are guaranteed to converge to a global minimizer.

A variety of statistical applications and inverse problems arising in scientific disciplines can be studied within the split feasibility frameworks. The generalization to non-linear maps and arbitrary Bregman divergences further widens the range of applications, as illustrated with an application to sparse regression with set-based constraints for generalized linear models. We focus on an application to optimization for intensity-modulated radiation therapy data, an historical application of linear split feasibility. The MM algorithms presented here consistently outpace classical linear algorithms on two datasets while producing comparable, and arguably better, solutions. 

\begin{acknowledgements}
We thank Steve Wright and D\'{a}vid Papp for their help with the IMRT data examples.
\end{acknowledgements}

\section*{Appendix}

\subsection{Proof of Proposition~\ref{prop:convergence_acc}}

Our proof of Proposition~\ref{prop:convergence_acc} relies on showing the algorithm map $\psi$ that carries out the Armijo backtracking line search is closed at all non-stationary points of $f$. Recall that a point-to-set map $A$ is closed at a point $\bx$, if for any sequence $\bx_j \rightarrow \bx$ such that $\by_j \in A(\bx_j)$ converges to $\by$, it follows that $\by \in A(\bx)$. Define the following point-to-set map
\[ S(\bx, \bv) = \left\{ \by : \by = \bx + \sigma^k \bv \text{ for $k \in \mathbb{Z}_+$ such that}\, f(\by) \leq f(\bx) + \alpha \sigma^{k} df(\bx ) \bv \right\},\]
and let $G$ denote the map
\begin{eqnarray*}
G(\bx) = \begin{pmatrix}
\bx \\
\nabla f(\bx)
\end{pmatrix}.
\end{eqnarray*}
The map $G$ is continuous since $f$ is Lipschitz differentiable. Let $A = S \circ G(\bx)$. By Corollary 2 in Chapter 7.7 of \cite{Luenberger2016}, the composite mapping $A$ is closed at $\bx$, if the mapping $S$ is closed at $G(\bx)$. We will state and prove a slightly more general result.

\begin{proposition}
\label{prop:closed}
The mapping $S$ is closed at all points $(\bx, \bv)$ provided that $\bv \not = \bZero$.
\end{proposition}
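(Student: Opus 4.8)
The plan is to unwind the definition of closedness directly. Suppose $(\bx_j,\bv_j)\to(\bx,\bv)$ with $\bv\ne\bZero$, and $\by_j\in S(\bx_j,\bv_j)$ with $\by_j\to\by$; we must show $\by\in S(\bx,\bv)$. By definition of $S$, for each $j$ there is an integer $k_j\ge 0$ with $\by_j=\bx_j+\sigma^{k_j}\bv_j$ and $f(\by_j)\le f(\bx_j)+\alpha\sigma^{k_j}\,df(\bx_j)\bv_j$. First I would pin down the limiting step length: since $\sigma^{k_j}\bv_j=\by_j-\bx_j\to\by-\bx$ while $\lVert\bv_j\rVert\to\lVert\bv\rVert>0$, the scalars $\sigma^{k_j}=\lVert\by_j-\bx_j\rVert/\lVert\bv_j\rVert$ converge to $t^\star:=\lVert\by-\bx\rVert/\lVert\bv\rVert$.

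The crux of the argument is to show the integer sequence $(k_j)$ is bounded. Because $\{1,\sigma,\sigma^2,\dots\}$ has no accumulation point other than $0$, an unbounded $(k_j)$ would, along a subsequence, force $\sigma^{k_j}\to 0$, hence $t^\star=0$ and $\by=\bx$; but $\bx=\bx+\sigma^k\bv$ has no solution when $\bv\ne\bZero$, so this would break closedness. I would rule it out using the uniform backtracking estimate behind Proposition~\ref{prop:backtrack}: under Assumptions A1--A3 the Lipschitz constant $L$ and the spectral bounds $a\le\lVert\bH(\cdot)\rVert\le b$ are finite on the relevant compact sublevel set, so the Armijo condition is satisfied after at most $s^\star=(\log\sigma)^{-1}\log(2(1-\alpha)a^2/(bL))$ backtracking steps, uniformly in the base point and in the admissible directions $\bv_j=-\bH(\bx_j)^{-1}\nabla f(\bx_j)$. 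Hence $k_j\le\lceil s^\star\rceil$ for all $j$, equivalently the step lengths $\sigma^{k_j}$ stay bounded below by a fixed positive constant. This boundedness step is the main obstacle; everything else is routine.

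With $(k_j)$ bounded, some value $k^\star$ occurs infinitely often, and along that subsequence $k_j\equiv k^\star$, so $\by_j=\bx_j+\sigma^{k^\star}\bv_j\to\bx+\sigma^{k^\star}\bv$, forcing $\by=\bx+\sigma^{k^\star}\bv$. Finally I would pass the Armijo inequality to the limit along this subsequence: $f$ is continuous and, by Assumption A2, $df$ is continuous, so $f(\by)\le f(\bx)+\alpha\sigma^{k^\star}\,df(\bx)\bv$. Therefore $\by\in S(\bx,\bv)$, establishing closedness at $(\bx,\bv)$.
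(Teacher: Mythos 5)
Your overall skeleton matches the paper's: write $\by_j=\bx_j+\sigma^{k_j}\bv_j$, note that $\sigma^{k_j}=\lVert\by_j-\bx_j\rVert/\lVert\bv_j\rVert\to\lVert\by-\bx\rVert/\lVert\bv\rVert$, argue that the integer exponents are eventually constant along a subsequence, and pass the Armijo inequality to the limit using continuity of $f$ and $df$. The step you yourself single out as the crux, however, is resolved incorrectly. Proposition~\ref{prop:backtrack} asserts that the Armijo inequality holds for \emph{every} exponent $s\geq s^\star$; it guarantees that large exponents are admissible, it does not cap the exponent attached to a point of $S$. Since $S(\bx_j,\bv_j)$ is by definition the set of \emph{all} points $\bx_j+\sigma^{k}\bv_j$ whose exponent satisfies the Armijo inequality --- not the output of the backtracking loop, which takes the smallest such exponent --- membership of $\by_j$ in $S(\bx_j,\bv_j)$ imposes no upper bound on $k_j$; on the contrary, whenever $df(\bx_j)\bv_j<0$, every sufficiently large $k$ yields a point of $S(\bx_j,\bv_j)$. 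So the inference ``Armijo holds after at most $s^\star$ halvings, hence $k_j\leq\lceil s^\star\rceil$'' runs exactly backwards, and your boundedness claim does not follow. A second, related problem: the proposition is stated for arbitrary $(\bx,\bv)$ with $\bv\neq\bZero$ (and in the composition $S\circ G$ the direction fed to $S$ is fixed by the map $G$), so you are not entitled to restrict attention to directions of the form $\bv_j=-\bH(\bx_j)^{-1}\nabla f(\bx_j)$ when invoking the constants $a$, $b$, and $L$.

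For comparison, the paper's proof never appeals to Proposition~\ref{prop:backtrack} at this point. It argues solely from the convergence of $\sigma^{k_j}$ to $\lVert\by-\bx\rVert/\lVert\bv\rVert$: when that limit is positive, the discreteness of $\{\sigma^k : k\in\mathbb{Z}_+\}$ away from zero forces $k_j$ to be eventually constant, which is precisely your (correct) argument in the case $\by\neq\bx$; the degenerate case $\by=\bx$, i.e.\ $k_j\to\infty$, is simply not treated there --- the proof tacitly takes $k_j$ to have a finite limit. Thus the part of your argument that works coincides with the paper's, while the additional machinery you introduce to dispose of the case $\by=\bx$ is the part that fails, and it cannot be repaired by citing Proposition~\ref{prop:backtrack}, since that result makes small steps more admissible, not less.
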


\begin{proof}
Consider sequences $\{ \bx_j \}$, $\{ \bv_j \}$ such that $\bx_j \rightarrow \bx, \bv_j \rightarrow \bv$, and let $\by_k \in S(\bx_{k}, \bv_k)$ with $\by_k \rightarrow \by$. For every $j$, there is some $k_j$ such that $\by_j = \bx_j + \sigma^{k_j} \bv_j$; rearranging we have that \[\sigma^{k_j} = \log \left (\frac{ \lVert \by_j - \bx_j \rVert }{\lVert \bv_j \rVert} \right). \] Taking limits on both side above yields 
\[ \sigma^{\bar{k}} = \frac{ \lVert \by - \bx \rVert }{\lVert \bv \rVert}, \text{ where }\bar{k} = \lim_{j \rightarrow \infty} k_j. \] Because $\by_k \in S(\bx_{k}, \bv_k)$, it holds for each $j$ that 
\begin{equation}\label{eq:ylim}
 f(\by_j) \leq f(\bx_j) + \alpha \sigma^{k_j} df(\bx_j ) \bv_j . \end{equation}
Since $\{k_j\}$ is a sequence of integers, it assumes only finitely many distinct values before converging to the constant sequence $\{ \bar{k} \}$; let $k^\dagger$ denote the maximum of these values. Then replacing $k_j$ by $k^\dagger$ and letting $j \rightarrow \infty$ in (\ref{eq:ylim}), together with continuity of $f$ and $df(\bx)$, we have that 
\[  f(\by) \leq f(\bx) + \alpha \sigma^{k^\dagger} df(\bx ) \bv . \]
That is, $\by \in S(\bx, \bv)$, proving that $S$ is closed at $(\bx, \bv)$.
\end{proof}

Proposition~\ref{prop:closed} informs us that the map $A$ is closed at all non-stationary points of $f$. We are now ready to prove Proposition~\ref{prop:convergence_acc}.

\begin{proof}
Fix an initial iterate $\bx_0$. Note that the set $\mathcal{L}_f(\bx_0) \equiv \{ \bx : f(\bx) \leq f(\bx_0)\}$ is compact since $f$ is coercive and the modified quasi-Newton method generates monotonically decreasing values. Since $\mathcal{L}_f(\bx_0)$ is compact, the sequence $\{\bx_{k}\}$ has a convergent subsequence whose limit is in $\mathcal{L}_f(\bx_0)$; denote this as
$\bx_{k_l} \rightarrow \bx_\star$ as $l \rightarrow \infty$. Our goal is to show that $\bx_\star$ must be a stationary point of $f$. To the contrary, suppose that $\bx_\star$ is not a stationary point of $f$.

Let $\by_{k_l} = \psi(\bx_{k_l}) \in A(\bx_{k_l})$. Note that $\by_{k_l} \in \mathcal{L}_f(\bx_0)$ and therefore the sequence $\{\by_{k_l}\}$
has a convergent subsequence $\{\by_{k_{l_j}}\}$. Denote this sequence's limit $\by_\star$. Note that the map $A$ is closed at $\bx_\star$, since $\bx_\star$ is not a stationary point of $f$. Therefore, by the definition of closed maps, we have that $\by_\star \in A(\bx_\star)$ and consequently
\begin{eqnarray}
\label{eq:algo_map}
f(\by_\star) \leq f(\bx_\star) + \alpha \sigma^k df(\bx_\star) \bH(\bx_\star)^{-1}\nabla f(\bx_\star) < f(\bx_\star),
\end{eqnarray}
for some positive integer $k$. On the other hand, since $f$ is Lipschitz-differentiable, it is continuous; therefore $\lim f(\bx_{k_l}) = f(\bx_\star)$. Moreover, for all $k_l$ we have that
\begin{eqnarray*}
f(\bx_\star) \leq f(\bx_{k_{l+1}}) \leq f(\bx_{k_l + 1}) = f(\varphi(\bx_{k_l})) \leq f(\psi \circ \psi(\bx_{k_l})) \leq f(\psi(\bx_{k_l})). \label{eq:ineq}
\end{eqnarray*}
By continuity of $f$, we have that $f(\bx_\star) \leq f(\by_\star)$, contradicting the inequality established in (\ref{eq:algo_map}). We conclude that $\bx_\star$ must be a stationary point of $f$.
\end{proof}

\subsection{$\beta$-divergence}
The $\beta$-divergence is defined
\begin{equation*}
D(\bx,\by) = \begin{cases} \displaystyle
\sum_{j}\dfrac{1}{\beta(\beta-1)} x_j^\beta + \sum_{j}\dfrac{1}{\beta} y_j^{\beta} - \dfrac{1}{\beta-1} x_j y_j^{\beta-1}  & \beta \in \Real \setminus \{0, 1 \}  \\
\sum_j x_j \log(\dfrac{x_j}{y_j}) - x_j + y_j & \beta = 1 \\
\sum_j \dfrac{x_j}{y_j} - \log(\dfrac{x_j}{y_j}) - 1 & \beta = 0 
\end{cases}
\end{equation*}
We see that the $\beta$-divergence corresponds to the Kullback-Leibler and Itakura-Saito divergences when $\beta=1,0$ respectively. Below we discuss the projection onto a hyperplane for the case of $\beta \in \Real \setminus \{0, 1 \}$.

The function $\phi(x) = \dfrac{1}{\beta(\beta-1)} \bx^\beta$ generates the $\beta$-divergence
\eq{D(\bx,\by) = \sum_{j}\dfrac{1}{\beta(\beta-1)} x_j^\beta + \sum_{j}\dfrac{1}{\beta} y_j^{\beta} - \dfrac{1}{\beta-1} x_j y_j^{\beta-1} .}
Its gradient is 
\eq{\nabla_\beta \phi(\bx) = \dfrac{1}{\beta-1}\bx^{\beta-1}, }
and recall the Fenchel conjugate of $\phi$ is given by
\eq{ \phi^\star(\bz) = \sup_{\bx}  \bigg( \langle \bz, \bx \rangle - \phi(\bx) \bigg) = \sup_{\bx} \bigg( \langle \bz, \bx \rangle - \sum_j \dfrac{1}{\beta(\beta-1)} x_j^\beta \bigg).}
\noindent Defining $h(\bx) =  \langle \bz, \bx \rangle - \sum_j \dfrac{1}{\beta(\beta-1)} x_j^\beta$, and differentiating with respect to $x_j$:
\eq{\nabla_{x_j} h &= z_j - \dfrac{1}{\beta-1}x_j^{\beta-1}  = 0 \\
x_j & = (\beta-1)^{\frac{1}{\beta-1}} z_j^{\frac{1}{\beta-1}} .}
\noindent Plugging into $\phi^\star(\bx)$,
\eq{\phi^\star(\bz) &= \sum_j z_j (\beta-1)^{\frac{1}{\beta-1}} z_j^{\frac{1}{\beta-1}} - \sum_j \dfrac{1}{\beta(\beta-1)} ( (\beta-1)^{\frac{1}{\beta-1}} z_j^{\frac{1}{\beta-1}})^\beta  \\
& =(\beta-1)^{\frac{1}{\beta-1}}  \sum_j z_j^{\frac{\beta}{\beta-1}} - \dfrac{ (\beta-1)^{\frac{1}{\beta-1}} }{\beta} \sum_j  z_j^{\frac{\beta}{\beta-1}} \\
& =(\beta-1)^{\frac{1}{\beta-1}} \bigg( 1  - \dfrac{1 }{\beta} \bigg) \sum_j  z_j^{\frac{\beta}{\beta-1}} .}
\noindent Finally, differentiating the Fenchel conjugate yields
\eq{\nabla \phi^\star(\bz) =(\beta-1)^{\frac{1}{\beta-1}} \bigg( 1  - \dfrac{1 }{\beta} \bigg)  \frac{\beta}{\beta-1}  \bz ^{\frac{1}{\beta-1}} =  (\beta-1)^{\frac{1}{\beta-1}}  \bz ^{\frac{1}{\beta-1} } .}
Thus, the projection of $\bx$ onto a hyperplane is given by
\eq{\mathcal{P}^\phi_{Q(a,c)} (\bx) =(\beta-1)^{\frac{1}{\beta-1}}  \bigg( \dfrac{1}{\beta-1} \bx^{\beta-1} - \widetilde{\gamma} a\bigg) ^{\frac{1}{\beta-1}},}
where $\widetilde{\gamma} = \text{arg}\min_{\gamma}
(\beta-1)^{\frac{1}{\beta-1}} \bigg( 1  - \dfrac{1 }{\beta} \bigg) \sum_j  \bigg( \dfrac{1}{\beta-1} x_j^{\beta-1} - \gamma a_j\bigg)^{\frac{\beta}{\beta-1}} + \gamma c .$

\subsection{Per-iteration complexity of IMRT study}
We detail the calculations behind our per iteration complexity remarks.
Note that in the IMRT dataset considered, $p \ll n < m$.

\subsubsection{Linear}
How many flops are required to compute a single MM update given by Equation (\ref{eq:mm_update}) in the linear case?
\begin{eqnarray*}
\nabla f(\bx_{k}) & = & \sum_i \bv_i(\bx_{k} - \mathcal{P}_{C_i}(\bx_{k}))
+ \sum_j w_j \bA^t(\bA\bx_{k} - \mathcal{P}_{Q_j}(\bA\bx_{k})) \\
\bx_{k+1} & = & \bx_{k} - [v\bI + w\bA^t\bA]^{-1}\nabla f(\bx_{k}).
\end{eqnarray*}

We first tally the flops required to compute the gradient $\nabla f(\bx_{k})$.
In the IMRT example, the first sum $\sum_i v_i(\bx_{k} - \mathcal{P}_{C_i}(\bx_{k}))$ requires $\mathcal{O}(n)$ flops. The matrix-vector product $\bz_k = \bA\bx_{k}$ requires $\mathcal{O}(mn)$ flops. 
The sum $\by_k = \sum_j w_j(\bz_k - \mathcal{P}_{Q_j}(\bz_k))$ requires $\mathcal{O}(m)$ flops. The matrix-vector product $\bA^t\by_k$ requires $\mathcal{O}(mn)$ flops. Adding the two sums requires $\mathcal{O}(n)$ flops. In total, the gradient requires $\mathcal{O}(mn)$ flops.

We next tally the flops required to compute the MM update. Forming the matrix $v\bI + w\bA^t\bA$ requires $\mathcal{O}(mn^2)$ flops. Computing its Cholesky factorization requires $\mathcal{O}(n^3)$ flops. We only need to compute the factorization once and can cache it. Subsequent iterations will require $\mathcal{O}(n^2)$ flops.
Thus, the amount of work required to compute a single MM update for the linear formulation is $\mathcal{O}(mn)$.

Note that the exact update in the linear case given by \[ \bx_{k+1}  =  (v\bI + w\bA^t\bA)^{-1} \left[ \sum_i \bv_i \mathcal{P}_{C_i}(\bx_{k}) + \bA^t \sum_j w_j \bA \mathcal{P}_{Q_j}(\bA\bx_{k}) \right] \]
has the same complexity as the update considered above.

\subsubsection{Non-linear}
We next consider the number of flops required for an MM update in the non-linear case:
\begin{eqnarray*}
\nabla f(\bx_{k}) & = & \sum_i v_i(\bx_{k} - \mathcal{P}_{C_i}(\bx_{k}))
+ \nabla h(\bx_{k})\sum_j w_j (h(\bx_{k}) - \mathcal{P}_{Q_j}(h(\bx_{k}))) \\
\bx_{k+1} & = & \bx_{k} - \eta_{\bx_{k}}[v\bI + w\nabla h(\bx_{k})dh(\bx_{k})]^{-1}\nabla f(\bx_{k}) \\
& = & \bx_{k} - \eta_{\bx_{k}}\left[\frac{1}{v}\bI - \frac{w}{v^2}\nabla h(\bx_{k}) \left (\bI + \frac{w}{v}dh(\bx_{k})\nabla h(\bx_{k}) \right)^{-1} dh(\bx_{k}) \right ]\nabla f(\bx_{k}) \\
& = & \bx_{k} - \eta_{\bx_{k}}\left[\frac{1}{v}\nabla f(\bx_{k}) - \frac{w}{v}\nabla h(\bx_{k}) \left (v\bI + wdh(\bx_{k})\nabla h(\bx_{k}) \right)^{-1} dh(\bx_{k})\nabla f(\bx_{k}) \right ].
\end{eqnarray*}

We first tally the flops required to compute the gradient $\nabla f(\bx_{k})$.
The first sum $\sum_i v_i(\bx_{k} - \mathcal{P}_{C_i}(\bx_{k}))$ requires $\mathcal{O}(n)$ flops. 
Computing $\bA\bx_{k}$ requires $\mathcal{O}(mn)$ flops. Computing the sum $\sum_j w_j (h(\bx_{k}) - \mathcal{P}_{Q_j}(h(\bx_{k})))$ requires $\mathcal{O}(m)$ flops. Computing $\nabla h(\bx_{k})$ requires $\mathcal{O}(mn)$ flops, and computing its product with the sum term $\nabla h(\bx_{k})\sum_j w_j (h(\bx_{k}) - \mathcal{P}_{Q_j}(h(\bx_{k})))$ requires $\mathcal{O}(mp)$ flops. In total, the gradient requires $\mathcal{O}(mn)$ flops. As in the linear case, the dominant cost are matrix-vector products involving the matrix $\bA$.

We next tally the flops required to compute the MM update. Forming the matrix $v\bI + w\nabla h(\bx_{k})dh(\bx_{k})$ requires $\mathcal{O}(np^2)$ flops. Computing its Cholesky factorization requires $\mathcal{O}(p^3)$ flops. Computing $dh(\bx_{k})\nabla f(\bx_{k})$ requires $\mathcal{O}(np)$ flops. Computing 
$\left (v\bI + wdh(\bx_{k})\nabla h(\bx_{k}) \right)^{-1} dh(\bx_{k})\nabla f(\bx_{k})$ requires $\mathcal{O}(p^2)$ flops. The product 
$\nabla h(\bx_{k})\left (v\bI + wdh(\bx_{k})\nabla h(\bx_{k}) \right)^{-1} dh(\bx_{k})\nabla f(\bx_{k})$ requires $\mathcal{O}(np)$ flops. Computing a candidate update requires $\mathcal{O}(n)$ flops.
An objective function evaluation requires $\mathcal{O}(mn)$ flops. Thus, including the line search, the amount of work required to compute a single MM update for the linear formulation is $\mathcal{O}(\max\{mn,np^2\})$. When $p^2 < m$, we conclude that the computational work for a single MM update for the non-linear case is comparable to that of the linear case. In practice, reducing the problem size via a non-linear formulation may additionally reduce the number of MM 
updates. 

\bibliographystyle{myplain}
\bibliography{SplitFeasibilityMM}

\end{document}